\newtheorem{thm}{Theorem}
\newtheorem{prop}{Proposition}[section]
\theoremstyle{definition}
\theoremstyle{remark}
\newcommand{\R}{{\mathbb{R}}}
\newcommand{\HH}{{\mathbb{H}}}
\newcommand{\del}{\partial}
\newcommand{\sm}[1]{C^\infty(#1)}
\newcommand{\av}[1]{\langle#1\rangle}
\newcommand{\delbar}{\overline{\partial}}
\newcommand{\Sum}{\Sigma}
\newcommand{\normL}[3]{||#1||_{L^{#2}(#3)}}
\newcommand{\normC}[3]{||#1||_{C^{#2}(#3)}}
\newcommand{\om}{\omega}
\newcommand{\Om}{\Omega}
\newcommand{\ep}{\epsilon}
\def\eps{\varepsilon}
\def\Ome{\Omega}
\def\Lam{\Lambda}
\def\to{\rightarrow}
\def\pt{\partial}
\def\RR{\mathbb{R}}
\def\HH{\mathbb{H}}
\theoremstyle{theorem}
\newtheorem{theorem}{Theorem}
\newtheorem{lemma}[theorem]{Lemma}
\newtheorem{wrappingup}[theorem]{Wrapping up}
\theoremstyle{definition}
\newtheorem{definition}[theorem]{Definition}
\newtheorem{remark}[theorem]{Remark}
\newtheorem{construction}[theorem]{Construction}
\theoremstyle{proposition-definition}
\newtheorem{proposition-definition}[theorem]{Proposition-Definition}
\numberwithin{equation}{section}
\renewcommand{\thefootnote}{\alph{footnote}}
\begin{document}

\title{\textbf{On a uniform estimate for the \\ quaternionic Calabi problem}}

\author{\textsc{Semyon Alesker$^{a}$ and Egor Shelukhin$^{b}$} \\
School of Mathematical Sciences \\
Tel Aviv University\\
69978 Tel Aviv, Israel \\
semyon@post.tau.ac.il, egorshel@post.tau.ac.il}

\footnotetext[1]{Partially supported by the Israel Science Foundation grant 701/08.} \footnotetext[2]{Partially supported by the Israel Science Foundation grant 509/07.}

\renewcommand{\thefootnote}{\arabic{footnote}}

\date{}

\maketitle

\begin{abstract}
We establish a $C^0$ \textit{a priori} bound on the solutions of
the quaternionic Calabi-Yau  equation (of Monge-Amp$\grave{\text{e}}$re type) on compact HKT manifolds with a
locally flat hypercomplex structure. As an intermediate step, we prove a quaternionic version of the Gauduchon theorem.
\end{abstract}

\tableofcontents

\section{Introduction and main results}
\subsection{HKT manifolds}

We first recall some definitions and preliminaries from HKT geometry.

\begin{definition}
A smooth manifold $M^{4n}$ with a triple of complex structures $I,J,K$ such that $IJ=-JI=K$ will be called a \textit{hypercomplex manifold}.
\end{definition}

\begin{remark}
Hypercomplex manifolds were explicitly introduced by Boyer \cite{boyer}.
\end{remark}

\begin{definition}
A hypercomplex manifold $(M^{4n},I,J,K)$ with a Riemannian
metric $\rho$ that is invariant under the three complex structures $(I,J,K)$ will be called a \textit{hyper-Hermitian} manifold.
\end{definition}

\begin{definition}({\it The $\del_J$-operator})
Consider the complex manifold $(M,I)$. Denote by $\Om^{p,q}_I(M)$
the space of $(p,q)$-forms on $(M,I)$. Let $\del:\Om^{p,q}_I(M) \to
\Om^{p+1,q}_I(M)$ and $\delbar:\Om^{p,q}_I(M) \to \Om^{p,q+1}_I(M)$
be the usual $\del$ and $\delbar$ operators on the complex manifold
$(M,I)$. Define $$\del_J := J^{-1}\circ \delbar \circ J.$$ Then
\cite{hkt-mflds-supersymmetry} (a)$\; J:\Om^{p,q}_I(M) \to
\Om^{q,p}_I(M)$,(b)$\;  \del_J:\Om^{p,q}_I(M) \to \Om^{p+1,q}_I(M)$
and (c)$\; \del\del_J=-\del_J\del$.
\end{definition}

Given a hyper-Hermitian manifold $(M,I,J,K,\rho)$ consider the differential form $$\Om:= - \om_J + \sqrt{-1}\om_K$$ where $\om_L(A,B):=\rho(A,B \cdot L)$ for any $L \in \HH$ with $L^2= -1$ and any two vector fields $A,B$ on $M$. We use the following definition of an HKT-metric.

\begin{definition}\label{definition HKT metric} The metric $\rho$ on $M$ is called an \textit{HKT-metric} if $$\del \Om = 0.$$
\end{definition}

\begin{remark} HKT manifolds were introduced in the physical literature by
Howe and Papadopoulos \cite{howe-papa}. For the mathematical treatment see Grantcharov-Poon \cite{_Gra_Poon_} and Verbitsky \cite{hkt-mflds-supersymmetry}. The original definition of HKT-metrics in \cite{howe-papa}
was different but equivalent to Definition \ref{definition HKT metric} (the latter was given in \cite{_Gra_Poon_}).
\end{remark}

\begin{remark}
The classical hyper-Kahler metrics (i.e. Riemannian metrics with
holonomy contained in the group $Sp(n)$) form a subclass of
HKT-metrics. It is well known that a hyper-Hermitian metric $\rho$
is hyper-Kahler if and only if the form $\Om$ is closed, or
equivalently $\del \Om  = 0$ and $\delbar \Om = 0.$
\end{remark}

\begin{definition} A form $\om \in \Om^{2k,0}_I(M)$ that satisfies $J \om = \overline{\om}$ will be called a \textit{real (2k,0)-form} \cite{hkt-mflds-supersymmetry}. The space of real $(2k,0)$-forms will be denoted by $\Om^{2k,0}_{I,\R}(M).$
\end{definition}

\begin{definition}({\it The $t$ isomorphism} \cite{hkt-mflds-supersymmetry}) Given a right vector space $V$ over $\HH$, there is an $\RR$-linear
isomorphism $t:\Lambda^{2,0}_{I,\R}(V) \to S_\HH(V)$ from the space
of the real $(2,0)$-forms on $V$ to the space of symmetric forms on
$V$ that are invariant with respect to the action of the group
$SU(2)$ of unit quaternions, given by $t(\eta)(A,A):=\eta(A,A\circ
J)$ (\cite{hkt-mflds-supersymmetry}). We shall denote by the same
letter the induced isomorphism $t:\Om^{2,0}_{I,\R}(M) \to S_\HH(M)$,
where the target is the space of global sections of the bundle with
the fiber $S_\HH(M)_x = S_\HH(T_x M)$ over a general point $x \in
M$.
\end{definition}

\begin{definition} Forms $\eta \in \Om^{2,0}_{I,\R}(M)$ satisfying $t(\eta) > 0$ and $t(\eta) \geq 0$ will be called \textit{strictly positive} and \textit{positive} respectively. We also call a form $\Theta \in \Om^{2n,0}_{I,\R}(M)$ \textit{strictly positive} if it equals $f\cdot \Om^n$ for a strictly positive function $f \in \sm{M,\R}$. This definition does not depend on the choice of the hyper-Hermitian form $\Om$, because the real line bundle $\Lambda^{2n,0}_{I,\R}$ is canonically oriented, and strict positivity of $\Theta$ is equivalent to $\Theta_x$ being a strictly positive element in the the fiber over $x$ for all $x \in M$. We refer to \cite{_Alesker_Verbitsky_,QuaternionicCalabiYau1} and references therein for further discussion of the notion of positivity.
\end{definition}

\begin{remark}
On a hyper-Hermitian manifold the metric $\rho$ and the form $\Om =  - \om_J + \sqrt{-1}\om_K$ satisfy $t(\Om)=\rho$ and are therefore mutually defining.
\end{remark}

The following is a hypercomplex analogue of the $\del\delbar$-lemma on complex manifolds.

\begin{prop}({\it The $\del\del_J$-lemma} cf. \cite{_Alesker_Verbitsky_}) A form $\Om \in \Om^{2,0}_{I,\R}(M)$ satisfies $\del \Om = 0$ if and only if it can be locally represented as $\Om = \del\del_J f$ for a function $f \in C^\infty(M,\R)$.
\end{prop}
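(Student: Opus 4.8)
The plan is to prove both implications separately, with the ``if'' direction being immediate and the ``only if'' direction (local existence of a potential) carrying all the weight. For the easy direction, suppose $\Om = \partial\partial_J f$ on some open set. Then $\partial\Om = \partial^2\partial_J f = 0$ because $\partial^2 = 0$. I would also verify that $\partial\partial_J f$ really is a real $(2,0)$-form: by property (b) of the $\partial_J$-operator, $\partial_J f \in \Om^{1,0}_I(M)$ and $\partial\partial_J f \in \Om^{2,0}_I(M)$, while the reality $J(\partial\partial_J f) = \overline{\partial\partial_J f}$ follows for real $f$ from $Jf = f$ together with $\partial_J = J^{-1}\circ\overline{\partial}\circ J$ by a direct computation.

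For the converse the statement is local, so I would fix a coordinate chart. The first step is to strengthen the hypothesis using reality: since $J\Om = \overline{\Om}$ for $\Om \in \Om^{2,0}_{I,\R}(M)$, and complex conjugation interchanges $\partial$ and $\overline{\partial}$, one computes $\partial_J\Om = J^{-1}\overline{\partial}(J\Om) = J^{-1}\overline{\partial}\,\overline{\Om} = J^{-1}\overline{\partial\Om} = 0$. Thus $\Om$ is simultaneously $\partial$-closed and $\partial_J$-closed, which is what makes a $\partial\partial_J$-potential plausible.

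The two local tools I would set up are Poincaré lemmas for each of the operators. For $\partial$, a smooth $\partial$-closed $(p,0)$-form with $p \geq 1$ is locally $\partial$-exact, by the Poincaré lemma applied in the $I$-holomorphic directions treating the antiholomorphic coordinates as parameters. For $\partial_J$ the key device is conjugation by $J$: since property (a) gives $J \colon \Om^{p,0}_I(M) \to \Om^{0,p}_I(M)$ and the definition gives $J\partial_J = \overline{\partial}J$, the complex $(\Om^{\bullet,0}_I(M),\partial_J)$ is carried isomorphically onto the Dolbeault complex $(\Om^{0,\bullet}_I(M),\overline{\partial})$. Hence the Dolbeault--Grothendieck lemma yields local $\partial_J$-exactness of any $\partial_J$-closed $(p,0)$-form with $p \geq 1$. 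I expect this $J$-conjugated $\partial_J$-Poincaré lemma to be the essential new geometric input.

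Finally I would run the double-complex $\partial\partial_J$-descent. The relations $\partial^2 = \partial_J^2 = 0$ and $\partial\partial_J = -\partial_J\partial$ make $\Om^{\bullet,0}_I(M)$ a double complex that is locally acyclic in positive degree for each differential. Starting from $\Om = \partial\alpha$ (via the $\partial$-Poincaré lemma), one finds $\partial\partial_J\alpha = -\partial_J\partial\alpha = -\partial_J\Om = 0$, so $\partial_J\alpha$ is $\partial$-closed; combining the two Poincaré lemmas, equivalently their homotopy operators $h_\partial$ and $h_{\partial_J}$, one corrects $\alpha$ until it becomes $\partial_J$-exact and extracts a scalar function $f$ with $\Om = \partial\partial_J f$. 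I expect this reconciliation of the independent $\partial$- and $\partial_J$-primitives into a single potential to be the main obstacle, since it is precisely here that the acyclicity of both complexes and the anticommutation relation must be used together rather than separately. A concluding check secures reality: because $\Om$ is real and $\partial\partial_J$ sends real functions to real $(2,0)$-forms, symmetrizing $f$ under complex conjugation (passing to its real part) produces the required $f \in C^\infty(M,\R)$.
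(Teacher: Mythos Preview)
The paper does not supply its own proof of this proposition; it is stated with a reference to \cite{_Alesker_Verbitsky_} and used as a black box. So there is no proof in the paper to compare against, but your argument can still be assessed on its own merits.

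Your treatment of the easy direction and of the two separate Poincar\'e-type lemmas (for $\partial$ via integration in the holomorphic variables with the antiholomorphic ones as parameters, and for $\partial_J$ via $J$-conjugation to the Dolbeault complex) is fine. The gap is in the final ``double-complex descent''. The pair $(\Om^{\bullet,0}_I,\partial,\partial_J)$ is \emph{not} a double complex in the usual sense: both differentials raise the single grading by one, so there is no bidegree to separate them. In that situation, local acyclicity of each differential together with anticommutation does \emph{not} formally imply a $\partial\partial_J$-lemma. Concretely, starting from $\Om=\partial\alpha$ and observing that $\partial_J\alpha$ is $\partial$-closed just lands you back at a $\partial$-closed $(2,0)$-form, and the procedure loops. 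A finite-dimensional counterexample to the abstract claim is easy to build: take $V^0=\R$, $V^1=\R^3$, $V^2=\R^2$, $V^3=0$ with $d_1(1)=e_1$, $d_1(e_2)=f_1$, $d_1(e_3)=f_2$ and $d_2(1)=e_2$, $d_2(e_1)=-f_1$, $d_2(e_3)=f_2$; both differentials are acyclic in positive degree and anticommute, yet $\mathrm{Im}(d_1d_2|_{V^0})=\R f_1$ is a proper subspace of $V^2=\mathrm{Ker}\,d_1\cap\mathrm{Ker}\,d_2$. Your sentence ``one corrects $\alpha$ until it becomes $\partial_J$-exact'' is exactly the missing step and does not follow from what you have set up.

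What makes the quaternionic case work is additional structure you have not invoked: either Verbitsky's identification of $(\Om^{\bullet,0}_I,\partial,\partial_J)$ with a genuine bicomplex (the quaternionic Dolbeault complex, via the $SU(2)$-action on forms), which supplies the missing bigrading and lets the standard zig-zag argument run, or---closer to the spirit of \cite{_Alesker_Verbitsky_} and to the locally flat setting of the present paper---an explicit computation in flat quaternionic coordinates relating $\partial\partial_J$ to the quaternionic Hessian and solving the resulting system directly. One of these ingredients needs to be added for your argument to close.
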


\begin{definition} A function $u \in \sm{M,\R}$ on a hypercomplex manifold $(M,I,J,K)$ for which $\del\del_J u$
is (strictly) positive will be called \textit{(strictly)
plurisubharmonic}.
\end{definition}

\begin{definition}
On any hypercomplex manifold there exists a unique connection
$\nabla$ with zero torsion that satisfies $\nabla I=\nabla J=\nabla
K =0$. It will be called \textit{the Obata connection} \cite{obata}
of the hypercomplex manifold.
\end{definition}

For a quadratic form $Q$ on a right vector space $V$ over the quaternions we shall denote by $\av{Q}_u$ its average over the action of the group $SU(2)$ of unit quaternions, that is $$\av{Q}_u(x):=\int_{SU(2)} Q(x \circ L) d\nu(L),$$ for the Haar probability measure $\nu$ on $SU(2)$. We shall denote by $Q_+$ the projection $Q_+:=Q - \av{Q}_u$ of $Q$.

\begin{lemma}\label{average over SU(2) of a quadratic form}
For a quadratic form $Q$ on a right vector space $V$ over the quaternions $$\av{Q}_u(x) = \frac{1}{4}(Q(x) + Q(x \circ I) + Q(x \circ J) + Q(x \circ K)).$$
\end{lemma}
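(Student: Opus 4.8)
The plan is to show that the claimed formula is exactly the statement that the Haar average over $SU(2)$ of any quadratic form $Q$ coincides with the average of $Q$ over the four-element set $\{x, x\circ I, x\circ J, x\circ K\}$. The guiding principle is that a quadratic form decomposes into pieces according to how the group $SU(2)$ acts, and only the $SU(2)$-invariant piece survives averaging; so both sides of the identity should be seen as the projection onto the invariant part. First I would polarize: writing $Q$ as the diagonal of a symmetric $\R$-bilinear form $B$ on $V$ (viewed as a real vector space), it suffices to understand how $B(x\circ L, x\circ L)$ behaves as $L$ ranges over the unit quaternions, since everything is built from such terms.

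\medskip

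The key computational step is to compute the Haar integral $\int_{SU(2)} B(x\circ L, x\circ L)\, d\nu(L)$ by exploiting the representation theory of $SU(2)$ acting on $V$ by right multiplication. Concretely, I would fix a point $x$ and consider the quaternionic line $x\HH \subset V$ spanned by $x$; the quantity $B(x\circ L, x\circ L)$ depends only on the restriction of $B$ to this line, so the problem reduces to a purely $4$-dimensional computation on $\HH$ itself. On $\HH$, writing $L = a + bI + cJ + dK$ with $a^2+b^2+c^2+d^2 = 1$, the vector $x\circ L$ has real coordinates that are linear in $(a,b,c,d)$, and hence $B(x\circ L, x\circ L)$ is a quadratic form in $(a,b,c,d)$. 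Averaging a quadratic form in $(a,b,c,d)$ against the uniform (Haar) measure on the unit $3$-sphere kills all the off-diagonal cross terms and replaces each of $a^2, b^2, c^2, d^2$ by its average value $\tfrac14$.

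\medskip

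This is exactly the step that makes the identity fall out: the symmetry of the sphere forces $\int a^2\, d\nu = \int b^2\, d\nu = \int c^2\, d\nu = \int d^2\, d\nu = \tfrac14$ and $\int ab\, d\nu = 0$ for distinct coordinates, so the Haar average equals $\tfrac14$ times the sum of the four ``diagonal'' contributions, which are precisely $Q(x\circ 1) = Q(x)$, $Q(x\circ I)$, $Q(x\circ J)$, and $Q(x\circ K)$. I expect the main obstacle to be setting up the reduction cleanly, namely verifying that averaging $B(x\circ L, x\circ L)$ over $L$ genuinely reduces to these moment computations on the sphere and that the cross terms vanish; once the moments $\langle a^2\rangle = \tfrac14$ and $\langle ab\rangle = 0$ are in hand, comparing with the right-hand side is immediate. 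An alternative, more structural route would be to invoke that the right-regular $SU(2)$-action on $\HH \cong \R^4$ is the sum of two copies of the standard $2$-dimensional representation and use Schur orthogonality directly, but the explicit sphere-moment computation is the most self-contained way to conclude.
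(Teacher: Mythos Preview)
Your argument is correct, but it follows a genuinely different route from the paper's. The paper gives a one-line proof: both sides are $SU(2)$-invariant as functions of $x$, and the $SU(2)$-average of each side equals $\langle Q\rangle_u(x)$; since an invariant quantity equals its own average, the two sides coincide. The nontrivial step there is the $SU(2)$-invariance of the right-hand side, which amounts to observing that $\{M, MI, MJ, MK\}$ is an orthonormal basis of $\HH$ for every unit quaternion $M$, so $\sum_{g\in\{1,I,J,K\}} Q(x\circ M\circ g)$ is the trace of the quadratic form $L\mapsto Q(x\circ L)$ and is independent of $M$.

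Your approach instead computes the Haar integral directly: you expand $Q(x\circ L)$ as a quadratic polynomial in the real coordinates $(a,b,c,d)$ of $L$ and integrate over $S^3$ using the elementary moments $\int a^2\,d\nu = \tfrac14$ and $\int ab\,d\nu = 0$. This is more hands-on and entirely self-contained; in particular you never need to check the $SU(2)$-invariance of the right-hand side, which is the one point the paper leaves implicit. The paper's argument is slicker and more structural, and would adapt more easily to analogous averaging identities over other groups, whereas yours has the virtue of requiring no insight beyond symmetry of the sphere.
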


The lemma follows by noting that both sides of the equality are $SU(2)$-invariant on one hand, and both their averages over $SU(2)$ equal $\av{Q}_u(x)$ on the other.

\begin{definition} For a quaternion $q\in \HH$ written in the
standard form
\[q= t + x \cdot i + y \cdot j + z \cdot k\] define the Dirac-Cauchy-Riemann operator $\frac{\del}{\del \overline{q}}$
on an $\HH$-valued function $F$ by
\[\frac{\del}{\del \overline{q}}F = \frac{\del}{\del t}F + \frac{\del}{\del x}F + \frac{\del}{\del y}F + \frac{\del}{\del z}F,\] and its quaternionic conjugate $\frac{\del}{\del q}$ by \[\frac{\del}{\del q}F = \overline{\frac{\del}{\del \overline{q}}\overline{F}} = \frac{\del}{\del t}F + \frac{\del}{\del x}F + \frac{\del}{\del y}F + \frac{\del}{\del z}F.\]
\end{definition}

Note that the quaternionic Hessian $\displaystyle Hess_\HH (u) := (\frac{\del^2
u}{\del \bar q_i \del q_j})$ is the average of the quadratic
form $D^2 u$ over $SU(2)$, as a short computation shows. As for
different $i$ and $j$ the operators $\frac{\del}{\del q_i}$ and
$\frac{\del}{\del \overline{q_j}}$ commute, this matrix is
hyper-Hermitian, namely satisfies the following definition.

\begin{definition} We call a quaternionic matrix $A = (a_{ij})_{i,j = 1}^n$ \textit{hyper-Hermitian} if $a_{ji} = \overline{a_{ij}}$ for the quaternionic conjugation $\HH \ni q \mapsto \overline{q} \in \HH.$
\end{definition}

We shall also use a version of a determinant defined for
hyper-Hermitian quaternionic matrices referring for further details,
properties, and references to \cite{alesker-bsm}. Considering $\HH$
as an $\R$-linear vector space of dimension $4$ we have the
embedding $0 \to Mat(n,\HH) \to Mat(4n,\R)$ of matrix $\R$-algebras.
Denote by $\mathcal{H}_n$ the image of the subspace of
hyper-Hermitian matrices.

\begin{prop}
There exists a polynomial $P$ on $\mathcal{H}_n$ such that $P^4 = det|_{\mathcal{H}_n}$ and $P(Id)=1$. Moreover $P$ is uniquely determined by these properties, is homogenous of degree $n$ and has integer coefficients.
\end{prop}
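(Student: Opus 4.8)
The plan is to factor the realification through the complex numbers and then extract two successive square roots, the inner one being a Pfaffian. Concretely, I would use the standard $\R$-algebra embeddings $\HH \hookrightarrow Mat(2,\C) \hookrightarrow Mat(4,\R)$, inducing $Mat(n,\HH) \hookrightarrow Mat(2n,\C) \hookrightarrow Mat(4n,\R)$, whose composite is the embedding defining $\mathcal{H}_n$. Writing a quaternion as $q = z_1 + z_2 j$ with $z_1, z_2 \in \C$, the first map is $q \mapsto \left(\begin{smallmatrix} z_1 & -\overline{z_2} \\ z_2 & \overline{z_1} \end{smallmatrix}\right)$, and under it quaternionic conjugation corresponds to conjugate-transposition. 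Hence a hyper-Hermitian matrix $A$ is sent to a Hermitian matrix $H = H(A) \in Mat(2n,\C)$ which, being the image of an $\HH$-linear map, commutes with the standard quaternionic structure on $\C^{2n}$; writing $J$ for the standard symplectic matrix ($J^2 = -I$, $J^T = -J$, $\det J = 1$), this commutation is the identity $\overline{H} = -JHJ$. Now for any $B \in Mat(2n,\C)$ the determinant of its image in $Mat(4n,\R)$ equals $|\det_\C B|^2$; applied to $B = H$, together with $\det_\C H \in \R$ (Hermiticity), this yields $\det|_{\mathcal{H}_n}(A) = (\det_\C H(A))^2$.

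The decisive step is to write $\det_\C H$ itself as the square of a polynomial, and this is exactly where the quaternionic structure --- not merely Hermiticity --- is used. Combining $H^T = \overline{H}$ with $\overline{H} = -JHJ$ gives $(HJ)^T = J^T H^T = -J\overline{H} = J^2 H J = -HJ$ (using $\overline{H} = -JHJ$), so $M := HJ$ is complex antisymmetric. I would therefore set $P(A) := \operatorname{Pf}(H(A)\,J)$. Since $\operatorname{Pf}(M)^2 = \det M = \det_\C H \cdot \det J = \det_\C H$, we obtain $P(A)^2 = \det_\C H(A)$ and hence $P(A)^4 = (\det_\C H(A))^2 = \det|_{\mathcal{H}_n}(A)$. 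The underlying mechanism is that the quaternionic structure forces the eigenvalues of the Hermitian matrix $H$ into equal pairs, so that $\det_\C H$ is automatically a perfect square; the Pfaffian is what turns this into a genuine polynomial square root, and establishing this is the heart of the proof, the rest being essentially bookkeeping.

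For the remaining assertions: the Pfaffian of a $2n \times 2n$ matrix is homogeneous of degree $n$ with integer coefficients in the matrix entries, and the entries of $M = H(A)J$ are homogeneous $\C$-linear functions of the real coordinates of $A$; thus $P$ is a polynomial, homogeneous of degree $n$, in agreement with $\det|_{\mathcal{H}_n}$ having degree $4n$. For the normalization, $H(Id) = I_{2n}$ gives $P(Id) = \operatorname{Pf}(J) = \pm 1$, and fixing the sign convention for $J$ (equivalently replacing $P$ by $-P$ when $n$ is odd) makes $P(Id) = 1$. Integrality of the coefficients I would obtain in two moves: first, a Galois argument shows they are rational, since for every $\sigma \in \operatorname{Aut}(\C/\Q)$ one has $(P^\sigma)^4 = (P^4)^\sigma = P^4$ (as $P^4 = \det|_{\mathcal{H}_n}$ has rational coefficients), forcing $P^\sigma = \zeta_\sigma P$ for a fourth root of unity $\zeta_\sigma$, and evaluation at the rational point $Id$ gives $\zeta_\sigma = 1$, so $P^\sigma = P$; second, since $\det|_{\mathcal{H}_n}$ in fact has integer coefficients (the realification is integer-linear in the quaternionic coordinates), Gauss's lemma applied to $P^4 \in \Z[\text{coords}]$ upgrades rational to integer coefficients. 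Finally, uniqueness follows because $P'^4 = P^4$ in the integral-domain polynomial ring forces $P' = \zeta P$ for a fourth root of unity $\zeta$, whereupon $P'(Id) = P(Id) = 1$ gives $\zeta = 1$, so $P' = P$.
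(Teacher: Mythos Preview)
The paper does not actually prove this proposition: it is stated as background and the reader is referred to \cite{alesker-bsm} for ``further details, properties, and references''. So there is no in-paper argument to compare against.

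Your proof is correct and is in fact one of the standard constructions of the Moore determinant. Two minor remarks. First, you can shortcut the Galois--Gauss step: the entries of $H(A)J$ are $\R$-linear in the real coordinates of $A$ with Gaussian-integer coefficients (coming from the explicit $2\times 2$ model of $\HH$ and multiplication by the integer matrix $J$), and the Pfaffian has integer coefficients in the matrix entries; hence $P$ has Gaussian-integer coefficients, and since you have already shown $P$ is real-valued on $\mathcal{H}_n$ (via $\overline{HJ}=J(HJ)J^T$ and $\det J=1$, so $\overline{\operatorname{Pf}(HJ)}=\operatorname{Pf}(HJ)$), the coefficients are in $\Z[i]\cap\R=\Z$. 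Second, in the uniqueness argument over the \emph{real} polynomial ring the only fourth roots of unity are $\pm 1$, so the normalization $P'(Id)=P(Id)=1$ immediately gives $P'=P$; phrasing it over $\C$ is harmless but unnecessary.
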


\begin{definition}
For a hyper-Hermitian matrix $A$ the \textit{Moore determinant} $det(A):= P (^\R A) \in \R$ for $^\R A$ the matrix in $\mathcal{H}_n$ corresponding to $A.$
\end{definition}

\begin{thm}
The Moore determinant restricts to the usual determinant on the $\R$-subspace of complex Hermitian matrices.
\end{thm}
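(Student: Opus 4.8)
The plan is to reduce the statement to a computation of ordinary determinants under the chain of embeddings of matrix algebras, and then to remove the resulting fourth-root ambiguity by a polynomial-identity argument. Throughout, let $A$ denote an $n\times n$ complex Hermitian matrix, regarded via $\C\subset\HH$ as a hyper-Hermitian quaternionic matrix, and write $\det_\C$ for the usual determinant of complex matrices and $P$ for the polynomial on $\mathcal{H}_n$ defining the Moore determinant in the previous proposition.

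First I would factor the real embedding $Mat(n,\HH)\to Mat(4n,\R)$ through the standard complexification $Mat(n,\HH)\to Mat(2n,\C)\to Mat(4n,\R)$. Two classical facts drive the computation. Under $Mat(n,\HH)\to Mat(2n,\C)$ (induced by $a+bj\mapsto\left(\begin{smallmatrix} a & b \\ -\bar b & \bar a\end{smallmatrix}\right)$), a complex scalar $z\in\C$ lands in $\operatorname{diag}(z,\bar z)$, so a complex matrix $A$ is sent, up to a fixed permutation similarity, to the block-diagonal matrix $\operatorname{diag}(A,\overline{A})$; hence its $2n\times 2n$ complex determinant equals $\det_\C(A)\,\overline{\det_\C(A)}=|\det_\C(A)|^2$. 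Secondly, for any $M\in Mat(m,\C)\to Mat(2m,\R)$ one has $\det_\R(M)=|\det_\C(M)|^2$. Composing these with $m=2n$ gives $\det_\R(A)=|\det_\C(A)|^4$ for the real determinant of $A$ as a $4n\times 4n$ matrix. Since $A$ is Hermitian, $\det_\C(A)$ is real, so in fact $\det\big|_{\mathcal{H}_n}(A)=\det_\C(A)^4$. Combining this with the defining identity $P^4=\det\big|_{\mathcal{H}_n}$ of the Moore polynomial yields $P(A)^4=\det_\C(A)^4$ for every complex Hermitian $A$.

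It then remains to pass from $P^4=\det_\C^4$ to $P=\det_\C$. The complex Hermitian matrices form a real linear subspace of $\mathcal{H}_n$, on which both $P$ and $\det_\C$ restrict to genuine real polynomials (in the $n^2$ real coordinates of a Hermitian matrix). The identity $P^4-\det_\C^4=0$ holds on this subspace, so in the polynomial ring it factors as $(P-\det_\C)(P+\det_\C)(P^2+\det_\C^2)=0$. As this ring is an integral domain, one of the three factors must vanish identically; the factor $P^2+\det_\C^2$ takes the value $1+1=2$ at the identity matrix and so is not identically zero. Finally, evaluating at the identity, where $P(Id)=1=\det_\C(Id)$, excludes $P+\det_\C\equiv 0$, leaving $P-\det_\C\equiv 0$, i.e. the Moore determinant coincides with $\det_\C$ on complex Hermitian matrices.

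The only genuinely delicate points are the two determinant identities of the second paragraph. Both are standard, but I would double-check the exact form of the embedding $\HH\to Mat(2,\C)$ to confirm that a complex scalar indeed maps to $\operatorname{diag}(z,\bar z)$ and hence that a complex matrix lands in $\operatorname{diag}(A,\overline{A})$ up to a fixed permutation similarity; the verification that $a+bj\mapsto\left(\begin{smallmatrix} a & b \\ -\bar b & \bar a\end{smallmatrix}\right)$ is multiplicative uses only $ja_2=\bar a_2 j$ and $j^2=-1$. Everything else is formal, and the polynomial-identity step is what cleanly eliminates the fourth-root sign ambiguity without recourse to any connectedness argument on the space of Hermitian matrices.
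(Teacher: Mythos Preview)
Your argument is correct. The two determinant identities under the embeddings $Mat(n,\HH)\hookrightarrow Mat(2n,\C)\hookrightarrow Mat(4n,\R)$ are standard and you have stated them accurately; in particular the observation that a complex scalar $z\in\C\subset\HH$ maps to $\operatorname{diag}(z,\bar z)$ under $a+bj\mapsto\left(\begin{smallmatrix} a & b \\ -\bar b & \bar a\end{smallmatrix}\right)$ is the key point, and it does yield $\det_\R({}^\R\!A)=\det_\C(A)^4$ for complex Hermitian $A$. The polynomial-identity step to eliminate the sign ambiguity is clean: both $P$ and $\det_\C$ are real polynomial functions on the real vector space of complex Hermitian $n\times n$ matrices, so $P^4-\det_\C^4=0$ holds in an integral domain, and evaluation at $Id$ kills the two spurious factors.

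As for comparison with the paper: the paper does not actually prove this theorem. It is stated as a known property of the Moore determinant, with the reader referred to \cite{alesker-bsm} for details and further references. So there is no ``paper's own proof'' to compare against; your proposal supplies a complete self-contained argument where the paper gives only a citation.
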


On flat HKT manifolds, the Moore determinant of $Hess_\HH(u)$ can be naturally identified (up to a positive multiplicative
constant) with $(\del\del_J u)^n$, see \cite[Corollary
4.6]{_Alesker_Verbitsky_}.

\subsection{The quaternionic Calabi-Yau equation}

A quaternionic version of the classical Calabi problem was
introduced by M. Verbitsky and the first named author in
\cite{QuaternionicCalabiYau1}. It says that if $(M,I,J,K,\Ome)$ is a
compact HKT-manifold and $f\in C^\infty(M,\RR)$ is a smooth real
valued function, then there exists a constant $A>0$ such that the
equation
$$(\Om + \del\del_J \phi)^n = A e^f \Om^n$$ has a solution $\phi \in
C^\infty(M,\R)$. Analogously to the K\"ahler case, if such a $\phi$
exists then $\Om + \del\del_J \phi$ is an HKT form. This equation is
non-linear and elliptic of second order. The uniqueness of a
solution is shown in \cite{QuaternionicCalabiYau1} and the existence of solutions was conjectured.
Notice that recently M. Verbitsky \cite{verbitsky-2009} has found a geometric
interpretation of this equation.

In \cite{QuaternionicCalabiYau1} it was also shown that a solution
of the above equation satisfies a $C^0$ a priori estimate provided
that the holonomy of the Obata connection of $(M,I,J,K)$ is
contained in the subgroup $SL_n(\HH)\subset GL_n(\HH)$. The main
result of this paper is to show the $C^0$ estimate under different
assumptions on the manifold, namely for the case of locally flat hypercomplex
structure (equivalently, whenever the Obata connection is flat).

In the recent preprint \cite{alesker-2011} the above conjecture was solved by the first named author under the additional
assumption that $M$ admits a hyperK\"ahler metric compatible with the underlying
hypercomplex structure.

\hfill

A different quaternionic version of the Monge-Amp\`ere equation on the flat quaternionic space $\mathbb{H}^n$ was introduced earlier by the
first named author \cite{alesker-jga-03} (based also on \cite{alesker-bsm}) where he proved the solvability of the Dirichlet
problem under appropriate assumptions.

\subsection{A-priori bounds}

In this paper we show uniform \textit{a priori} bounds on the solution $\phi$ of the quaternionic Calabi-Yau equation on a locally flat HKT manifold. That is - we show that there exists a constant $C$ depending only on $(M,I,J,K,\Om)$ and $f$ such that the solution $\phi$ normalized e.g. by $\max_M \phi = 0$ satisfies $$||\phi||_{L^\infty} \leq C.$$

Such bounds were shown in \cite{QuaternionicCalabiYau1} under the condition that there exists a holomorphic $(2n,0)$-form on $M$ (with respect to the complex structure $I$) that is nowhere vanishing. We show these bounds in the case of general locally flat HKT manifolds.


More specifically we prove the following.

\begin{thm}\label{locally flat}
Let $(M,I,J,K,\Om)$ be a compact HKT manifold such that the Obata
connection is flat. Let $\phi \in \sm{M,\R}$ be a solution of the
equation
$$(\Om + \del\del_J\phi)^n = f \Om^n; \;f>0$$ with the normalization
condition $\max_M \phi = 0$. Then $$||\phi||_{L^\infty} \leq C,$$
for a constant $C$ depending only on $(M,I,J,K,\Om)$ and
$||f||_{L^\infty}$.
\end{thm}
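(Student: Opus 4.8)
\emph{Proof strategy.} The plan is to adapt Yau's Moser iteration for the complex Calabi--Yau equation to the quaternionic HKT setting, the one essential new ingredient being a quaternionic Gauduchon theorem that compensates for the failure of $\Om$ to be closed. Write $\Om_\phi := \Om + \del\del_J\phi$, so the equation reads $\Om_\phi^n = f\,\Om^n$ with $\Om_\phi$ a strictly positive $(2,0)$-form, and set $\psi := -\phi \ge 0$ (using $\max_M\phi = 0$); the goal is an a priori bound on $||\psi||_{L^\infty}$. The starting point is the telescoping identity
\[
\Om_\phi^n - \Om^n \;=\; \del\del_J\phi \wedge T,
\qquad
T \;:=\; \sum_{k=0}^{n-1}\Om_\phi^{\,k}\wedge\Om^{\,n-1-k},
\]
in which $T$ is a strictly positive $(2n-2,0)$-form because $\Om_\phi$ and $\Om$ are. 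Wedging with the $(0,2n)$-form $\overline{\Om}^{\,n}$ converts everything to top-degree real forms, and integrating against the weight $\psi^{p}$ produces, after one integration by parts (legitimate once the Gauduchon weight introduced below is inserted), the coercive quantity $\int_M \psi^{p-1}\,\del\psi\wedge\del_J\psi\wedge T\wedge\overline{\Om}^{\,n}$; by the strict positivity of $T$ this dominates a multiple of $\int_M\psi^{p-1}|\del\psi|^2\,dV$, balanced against $\int_M\psi^{p}\,(f-1)\,dV$ for the positive volume form $dV$ fixed next.

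The integration by parts is where the HKT (as opposed to hyper-K\"ahler) hypothesis bites: $\Om$ is not closed, only $\del\Om = 0$, so a priori moving $\del$ and $\del_J$ off of $\phi$ leaves uncontrolled error terms. To remove them I would first prove the quaternionic Gauduchon theorem. Consider the second-order operator $P\colon\sm{M,\R}\to\sm{M,\R}$ defined by
\[
P(v)\,\Om^n\wedge\overline{\Om}^{\,n} \;:=\; \del\del_J v \wedge \Om^{\,n-1}\wedge\overline{\Om}^{\,n}.
\]
Using $\del\Om=0$ one checks that $P$ is linear elliptic and annihilates the constants (it has no zeroth-order term), so by the strong maximum principle $\ker P$ consists precisely of the constants; its formal $L^2$-adjoint therefore has a one-dimensional kernel, which the Krein--Rutman theorem (equivalently, the adjoint maximum principle) shows is spanned by a strictly positive function $\eta\in\sm{M,\R}$. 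This $\eta$ is the quaternionic analogue of Gauduchon's conformal factor: by construction the $(2n-2,2n)$-form $\Theta := \eta\,\Om^{\,n-1}\wedge\overline{\Om}^{\,n}$ satisfies $\del\del_J\Theta = 0$, equivalently $\int_M \del\del_J v\wedge\Theta = 0$ for every $v$ (the cross terms from the successive integrations by parts vanish by Stokes for $\del$ and for $\del_J=J^{-1}\delbar J$ separately, together with $\del\del_J=-\del_J\del$). Taking $dV := \eta\,\Om^n\wedge\overline{\Om}^{\,n}$ as background volume form then makes the integration by parts of the previous paragraph exact.

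With a clean identity in hand the argument becomes essentially formal. The coercivity inequality, combined with the pointwise arithmetic--geometric mean inequality applied to the Moore determinant of $Hess_\HH(\phi)$ (here local flatness enters, via the identification of $(\del\del_J\phi)^n$ with $\det Hess_\HH(\phi)$ recorded above, and via the flat local model $\HH^n$ for the Sobolev inequality), yields an estimate of the schematic form
\[
||\psi^{p/2}||_{W^{1,2}}^{2} \;\le\; C\,p\,\big(1+||f||_{L^\infty}\big)\int_M \psi^{p}\,dV.
\]
Feeding this into the Sobolev embedding $W^{1,2}\hookrightarrow L^{2\chi}$ gives the iteration step
\[
\normL{\psi}{\chi p}{M} \;\le\; (C p)^{1/p}\,\normL{\psi}{p}{M},
\qquad \chi=\tfrac{2n}{2n-1}.
\]
Iterating along $p_j = p_0\chi^{\,j}$ and using that $\prod_j (C p_j)^{1/p_j}$ converges produces $||\psi||_{L^\infty}\le C\,\normL{\psi}{p_0}{M}$, and the base bound $\normL{\psi}{p_0}{M}\le C$ (say $p_0=2$) follows from the $p=1$ case of the coercivity inequality, the normalization $\int_M(f-1)\,dV=0$ coming from $\del\del_J\Theta=0$, and the Poincar\'e inequality for $dV$. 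This closes the estimate.

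The main obstacle is the quaternionic Gauduchon theorem itself: one must confirm that $P$ is genuinely elliptic and has no zeroth-order term, that its adjoint kernel is one-dimensional and positive, and---most delicately---that the resulting $\Theta$ really does make \emph{all} boundary contributions in the repeated integration by parts vanish, since one is juggling $\del$, $\del_J$ and the conjugate form $\overline{\Om}$ simultaneously rather than the single operator $d$ of the K\"ahler case. A secondary difficulty is local-algebraic: one needs the pointwise lower bound for the $\Om$-trace of $\del\del_J\phi$ in terms of $f^{1/n}-1$, which rests on the concavity of $\det^{1/n}$ on positive hyper-Hermitian matrices, and it is precisely local flatness that lets this be read off from the flat quaternionic model at each point. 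Once these two points are secured, the Moser iteration runs as in the K\"ahler case.
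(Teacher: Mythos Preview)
Your outline and the paper share the quaternionic Gauduchon theorem (the paper's Proposition~\ref{P:gauduchon} is essentially what you sketch), but thereafter they diverge completely. The paper does \emph{not} run a Moser iteration. It uses the Gauduchon form only to manufacture an operator with a nonnegative Green function and thereby obtain an $L^1$-bound on $\phi$ (Lemma~\ref{L:green}, Proposition~\ref{P:L1}); the passage from $L^1$ to $L^\infty$ is then carried out by B{\l}ocki's method, namely a quaternionic Alexandrov--Bakelman--Pucci inequality (Proposition~\ref{key proposition}, Lemma~\ref{key lemma}) applied on a small flat ball around the minimum of $\phi$, after constructing a carefully tailored local potential $g$ with $\Om=\pt\pt_J g$ (Lemma~\ref{good r, a and g}). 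Local flatness is used in an essential way precisely here: it furnishes genuine $\HH^n$-charts in which the real ABP inequality and the comparison $\det D^2u\le const\cdot\det\big(\frac{\pt^2 u}{\pt q_i\pt\bar q_j}\big)^4$ make literal sense.

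The gap in your plan is the integration by parts, and it is more serious than your closing paragraph suggests. The Gauduchon condition you obtain is $\pt\pt_J\big(\eta\,\Om^{n-1}\wedge\overline\Om^{\,n}\big)=0$; this is exactly what makes $\int\pt\pt_J v\wedge\eta\,\Om^{n-1}\wedge\overline\Om^{\,n}=0$, and that is all the paper needs for its $L^1$ estimate. In your Moser step, however, you are pairing $\pt\pt_J\phi$ against $\psi^p\,T\wedge\eta\,\overline\Om^{\,n}$ with $T=\sum_k\Om_\phi^{\,k}\wedge\Om^{\,n-1-k}$. Even granting $\pt T=0$, moving the $\pt$ leaves the residual term $\int\psi^p\,\pt_J\phi\wedge T\wedge\pt(\eta\,\overline\Om^{\,n})$, and the Gauduchon identity says nothing about the bare $\pt(\eta\,\overline\Om^{\,n})$. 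This error carries one derivative of $\psi$ and is weighted by $T$, which depends on the unknown $\Om_\phi$; a Cauchy--Schwarz absorption into the coercive term throws off a remainder of order $\int\psi^{p+1}(\cdots)$, which is the wrong power for the iteration to close. This is exactly the obstruction that makes Yau's iteration delicate already in the complex non-K\"ahler setting, and the single $\pt\pt_J$-closed weight you construct does not by itself resolve it. A related symptom: your claimed normalisation $\int(f-1)\,dV=0$ does not follow from $\pt\pt_J\Theta=0$, since the telescoping identity involves $T$ rather than $\Om^{n-1}$. And a structural red flag: as you present it, your argument never uses local flatness in an essential way (Sobolev holds on any compact manifold; concavity of $\det^{1/n}$ on positive hyper-Hermitian matrices is pointwise linear algebra), whereas the paper's ABP route uses it crucially---so either you would be proving a strictly stronger theorem, or the hard step has been elided.
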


\begin{remark}\label{R:rem18}
Let us describe an example of an HKT manifold satisfying the
assumptions of Theorem \ref{locally flat} which does not satisfy the
assumptions of the uniform estimate obtained in
\cite{QuaternionicCalabiYau1}. This example was kindly mentioned to
us by the anonymous referee; it appears explicitly in \cite[Section
4.3]{_Gra_Poon_}.

Let us consider the space $\HH^n\backslash\{0\}$ with the standard
flat hypercomplex structure. Consider the HKT metric corresponding
to the form $\Ome=\partial\partial_J (\ln r)$ where $r$ is the
Euclidean distance to the origin; the condition of positivity can be
easily checked by a direct computation.

Let us fix a real number $q> 0,\,q \neq 1$. The multiplication by $q$
acts on $\HH^n\backslash\{0\}$; this action preserves the
hypercomplex structure and the HKT metric. Hence it induces an HKT
structure on the quotient manifold $(\HH^n\backslash\{0\})/\langle q \rangle$
where $\langle q \rangle$ denotes the group generated by the multiplication by $q$.
This quotient is a compact HKT manifold; it is called the
quaternionic Hopf manifold. It is well known (and not hard to check)
that this manifold, equipped with the complex structure $I$, has no
non-zero holomorphic $(2n,0)$-forms. In particular the uniform
estimate of \cite{QuaternionicCalabiYau1} does not apply to it.
\end{remark}

\section*{Acknowledgements}
We thank V. Palamodov, I. Polterovich, L. Polterovich, A. Pulemotov,
and M. Verbitsky for useful discussions. We also thank the anonymous
referee for suggesting to us the example of the Hopf manifold
described in Remark \ref{R:rem18}.

\section{Proof}
Here we present the proof of Theorem \ref{locally flat} assuming several important propositions
that we prove later. The method of the proof is heavily based on the paper \cite{BlockiUniform} of Z. Blocki.
We use the generic notation $const$ to denote any positive constant that depends only on $n,M,I,J,K,\Ome$.

\begin{proof}(Theorem \ref{locally flat})

{\it Step 1: the $L^1$ bound.}

As proven in \cite{QuaternionicCalabiYau1} $\Om + \del\del_J \phi > 0$ in this case, or in other words it is an HKT form for a certain HKT metric. Whence by Proposition \ref{P:L1} that we prove in Section \ref{L1 bound section} below, with the normalization $\max_M\phi=0$, the norm $||\phi||_{L^1}$ is bounded by a constant depending on $M,I,J,K,\Ome$.

{\it Step 2: The Taylor expansion of the potential of the metric.}

We first formulate and prove the exact statements that allow us to make our calculations uniform in an appropriate sense, and then make the calculations.

\begin{lemma}\label{good r, a and g}
Given an HKT-manifold $(M,I,J,K,\Om)$ there exist $r>0$ and $a>0$ such that $\forall z_0 \in M$ $\exists g \in \sm{B(z_0,2r)}$ such that
\begin{enumerate}
\item $g < 0$ on $B(z_0,2r)$,
\item $\Om = \del\del_J g$ on $B(z_0,2r)$,
\item $\inf_{B(z_0,2r) \setminus B(z_0,r)}g \geq \inf_{B(z_0,r)}g + a$,
\item $\inf_{B(z_0,r)}g = g(z_0),$
\item $\normC{g}{10}{B(z_0,2r)} \leq const.$
\end{enumerate}
\end{lemma}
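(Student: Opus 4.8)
The plan is to reduce the whole statement, via compactness, to a single uniform local construction, and to obtain $g$ from an arbitrary local potential of $\Om$ by subtracting a well-chosen $\del\del_J$-harmonic correction. Since in the ambient locally flat setting the Obata connection is flat, each point of $M$ has a neighbourhood with flat quaternionic coordinates in which, under the isomorphism $t$, the form $\del\del_J u$ corresponds to the quaternionic Hessian $Hess_\HH(u) = \av{D^2 u}_u$. I would cover $M$ by finitely many relatively compact such charts $V_1,\dots,V_N$, on each of which the $\del\del_J$-lemma supplies a fixed smooth real potential $h_\alpha$ with $\del\del_J h_\alpha = \Om$. Letting $\delta>0$ be a Lebesgue number of this cover for the metric $\rho = t(\Om)$ and fixing $r$ with $2r\le\delta$, every ball $B(z_0,2r)$ lies in some chart $V_\alpha$; I then take $u:=h_\alpha$ as the starting potential and work in the flat coordinate $q$ centred at $z_0$ (so $z_0\leftrightarrow 0$).

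The heart of the argument is the following normalization. Let $\ell(q)=u(0)+\langle\nabla u(0),q\rangle$ be the affine Taylor part of $u$ at $z_0$ and $Q(q)=\tfrac12 D^2u(0)(q,q)$ its quadratic Taylor term, split as $Q=\av{Q}_u+Q_+$. Because $\ell$ is affine and $Q_+$ is a quadratic with $\av{Q_+}_u=0$, both have vanishing quaternionic Hessian, so $\del\del_J\ell=\del\del_J Q_+=0$ \emph{identically} in the flat coordinate. Hence
$$g := u - \ell - Q_+ - M$$
satisfies $\del\del_J g=\del\del_J u=\Om$ for any constant $M$, giving (2). By construction the zeroth, first, and non-averaged second order terms cancel, leaving
$$g(q)+M = \av{Q}_u(q) + R(q),$$
with cubic remainder $R(q)=O(|q|^3)$. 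Now $\av{Q}_u=\tfrac12 Hess_\HH(u)(0)$ is, under $t$, a fixed positive multiple of $\rho(0)=t(\Om)(0)$, hence positive definite; as $\rho$ is a fixed continuous metric on the compact $M$, one gets $\av{Q}_u(q)\ge \tfrac{c}{2}|q|^2$ with $c>0$ independent of $z_0$.

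It remains to fix the constants and read off (1), (3), (4), (5). The remainder obeys $|R(q)|\le C_3|q|^3$ with $C_3$ bounded by $\sup_\alpha\normC{h_\alpha}{3}{\overline{V_\alpha}}$, finite since there are finitely many smooth potentials; shrinking $r$ uniformly so that $2r\le c/(4C_3)$ yields $g(q)+M\ge \tfrac{c}{4}|q|^2$ on all of $B(0,2r)$, with equality only at $q=0$. Thus $\inf_{B(z_0,r)}g=g(z_0)=-M$, which is (4), while $\inf_{B(z_0,2r)\setminus B(z_0,r)}g\ge \tfrac{c}{4}r^2-M$, so (3) holds with the uniform gap $a=\tfrac{c}{4}r^2$. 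For (1) I take $M:=\sup_{B(0,2r)}(\av{Q}_u+R)+1$, bounded above by a uniform constant, forcing $g<0$; subtracting this constant affects neither (2), (3) nor (4). Finally (5) holds because $g=h_\alpha-\ell-Q_+-M$ is $h_\alpha$ corrected by an affine-plus-quadratic polynomial whose coefficients are the $1$- and $2$-jets of $h_\alpha$ at $z_0$, all bounded by $\sup_\alpha\normC{h_\alpha}{10}{\overline{V_\alpha}}<\infty$; hence $\normC{g}{10}{B(z_0,2r)}\le const$.

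The one genuine difficulty is \emph{uniformity} of $r$, $a$, and the $C^{10}$ bound in $z_0$, and my proposed resolution routes all of it through the finite cover: the Lebesgue number gives a uniform $r$, the uniform lower bound $c$ on the fixed metric $\rho=t(\Om)$ over compact $M$ gives the quadratic coercivity, and the finitely many $h_\alpha$ give uniform $C^{10}$ (hence $C^3$) control of both $R$ and the correction polynomial. A secondary point is the comparability, uniform over the finite cover, of the geodesic balls $B(z_0,s)$ with the coordinate balls in the Taylor estimates, which is immediate by compactness. I emphasize that local flatness enters \emph{essentially} here: it is precisely what makes $\del\del_J Q_+\equiv 0$ identically rather than merely at $z_0$, so that subtracting $Q_+$ preserves condition (2) on the whole ball instead of only to leading order.
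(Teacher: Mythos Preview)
Your proof is correct and follows essentially the same approach as the paper's: both take a fixed local potential from a finite atlas of flat charts, subtract its affine part together with the non-$SU(2)$-invariant piece $Q_+$ of the quadratic Taylor term (which have identically vanishing quaternionic Hessian in flat coordinates, so condition (2) is preserved), leaving a function whose leading behaviour is the positive-definite quaternionic Hessian, and then control the cubic remainder uniformly via the finite cover. The only cosmetic difference is that the paper establishes (3)--(4) by first proving $D^2 g \ge \tfrac{\eps}{2}I$ on $B(z_0,2r)$ (hence convexity with minimum at $z_0$), whereas you read off the same conclusions directly from the pointwise lower bound $g(q)+M \ge \tfrac{c}{4}|q|^2$.
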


\begin{proof}(Lemma \ref{good r, a and g})
\emph{Claim 1. Suitable atlas:} There exists a finite open cover $\{U_i\}_{i \in I}$ of $M$ such that
\begin{enumerate}
\item $U_i \Subset V_i$ for open sets $V_i$ isomorphic as hypercomplex manifolds to open subsets in $\mathbb{H}^{n}$,
\item there exist functions $g_i \in \sm{V_i,\R}$ such that $\Om = \del\del_J g_i$ on $V_i$,
\item $\normC{g_i}{20}{U_i} < const,$
\item there exists $r_0>0$ such that for any point $z_0\in M$ there
exists $i_0\in I$ satisfying $B(z_0,2r_0)\Subset U_{i_0}$.
\end{enumerate}
This claim is an easy consequence of Proposition 1.15 of
\cite{_Alesker_Verbitsky_}.

\emph{Claim 2. Uniform lower bound on the quaternionic Hessian:} There exists a positive constant $\epsilon$ such that $Hess_\HH(g_i) > \ep I$ on $U_i$ for all $i \in I$.

Indeed, since $g_i$ is strictly plurisubharmonic on $V_i$ we have
$Hess_\HH(g_i) > 0$ and as $U_i \Subset V_i$, there exist constants
$\ep_i$ such that $Hess_\HH(g_i) > \ep_i I$ on $U_i$. Take $\ep \leq
min_{i \in I} \ep_i$.

Given a point $z_0 \in M$, we shall now construct the function $g$ and
choose $r > 0$,$a > 0$. Consider $(U,\widetilde{g})=(U_{i_0},g_{i_0})$ with $B(z_0,2r_0)\Subset U_{i_0}$.

The Taylor expansion of $\widetilde{g}$ about $z_0$ (in flat coordinates on $V_i \Supset U_i$) gives $$\widetilde{g}(z_0 + h) = \widetilde{g}(z_0) + d_{z_0}\widetilde{g}(h) + D^2_{z_0}\widetilde{g}(h) + \Theta(h),$$ where $\Theta(h)=o(|h|^2).$ Now we split the quadratic form $D^2_{z_0}\widetilde{g}(h)$ into its invariant and complementary parts with respect to the action of $SU(2)$: $$D^2_{z_0}\widetilde{g}(h) = \av{D^2_{z_0}\widetilde{g}}_u(h) + (D^2_{z_0}\widetilde{g})_+(h) = Hess_\HH(\widetilde{g})_{z_0}(h) + (D^2_{z_0}\widetilde{g})_+(h).$$

Define $g$ by $$g(z_0 + h) := Hess_\HH(g)_{z_0}(h) + \Theta(h).$$

\emph{Claim 3. Equality of Hessians:} We have the pointwise equality $Hess_\HH(g) \equiv Hess_\HH(\widetilde{g})$ on $U$ (and therefore $\Om = \del\del_J g$ on $U$).

\begin{proof}(Claim 3)
It is sufficient to prove that the function $p(h):= \widetilde{g} - g = g(z_0) + d_{z_0}g(h) + (D^2_{z_0}g)_+(h)$ satisfies $Hess_\HH(p) \equiv 0$ on $U$. The quaternionic Hessian of the affine function $g(z_0) + d_{z_0}g(h)$ certainly vanishes. Therefore it is enough to prove that $Hess_\HH((D^2_{z_0}g)_+) \equiv 0$. And indeed $$Hess_\HH((D^2_{z_0}g)_+) = \av{D^2(D^2_{z_0}g - \av{D^2_{z_0}g}_u)}_u = \av{D^2_{z_0}g - \av{D^2_{z_0}g}_u}_u =$$ $$= \av{D^2_{z_0}g}_u - \av{\av{D^2_{z_0}g}_u}_u = \av{D^2_{z_0}g}_u - \av{D^2_{z_0}g}_u = 0.$$ The second equality follows since for a quadratic form $Q$ we have $D^2 Q = Q$.
\end{proof}

Note that by construction $D^2_{z_0}g = Hess_\HH(g)_{z_0} =
Hess_\HH(\widetilde{g})_{z_0} > \ep I$ where $\ep$ is provided by
Claim 2. We shall choose $r < r_0$ small enough so that $D^2_{(z_0 +
h)}g \geq \frac{\ep}{2} I$ for all $h \leq 2r$. Consider the Taylor
expansion of $D^2_{(z_0 + h)}g$ in $h$ about $h=0$: $$D^2_{(z_0 +
h)}g = D^2_{(z_0)}g + \Theta_1(h).$$ The function $\Theta_1$
satisfies $|\Theta_1| < \kappa |h| I$ for a constant $\kappa$
depending only on $\normC{\widetilde{g}}{10}{U}$. Therefore, by
Claim 1, $\kappa$ depends on $M,I,J,K,\Ome$ only. Consequently
$$D^2_{(z_0 + h)}g \geq \ep I - \kappa |h| I = (\ep - \kappa |h|)
I.$$ Hence for $|h| < \frac{\ep}{2\kappa}$ we have $D^2_{(z_0 + h)}g
\geq \frac{\ep}{2} I$. Therefore we can choose $r =
\min\{r_0,\frac{\ep}{2\kappa}\}/4$. Note that $r = const$. By the
choice of $r$, the function $g$ is convex with $D^2 g \geq
\frac{\ep}{2} I$ in $B(z_0,2r)$ with minimum in $z_0$, thus
satisfying condition 4.

By a straightforward computation we have then the estimate $g(z_0 +
h) > const \cdot \frac{\ep h^2}{2}$. Hence $a = const \cdot
\frac{\ep r^2}{2} = const$ can be chosen to satisfy condition 3.
Condition 2 is satisfied by Claim 3. The function $g$ can be
modified by adding a constant to satisfy condition 1. Condition 5 is
satisfied by property 3 of Claim 1.

\end{proof}

{\it Step 3: Wrapping up.}

Choose $z_0 \in M$ at which the function $\phi$ attains its minimum:
$\phi(z_0)= \min_M \phi$. Lemma \ref{good r, a and g} provides us
then with an appropriate $r,a >0$ and $g \in \sm{B(z_0,2r)}$. Then
the function $u = \phi + g$ in the domain $D = B(z_0,2r)$ satisfies
the conditions of Lemma \ref{key lemma} below. Indeed $\del\del_J u
= \Om + \del\del_J \phi > 0$ whence $u$ is plurisubharmonic and $u$
is negative on $D$ as both $\phi$ and $g$ are. The set $\{u < \inf_D
u + a\}$ is contained in $B(z_0,r) \Subset B(z_0,2r)$ and hence is
relatively compact.

Lemma \ref{key lemma} gives us $$\normL{\phi}{\infty}{M} -
\normL{g}{\infty}{D} \leq \normL{u}{\infty}{D} \leq a + const \cdot
(r/a)^{4n} \normL{u}{1}{D} \normL{f}{\infty}{D}^4.$$  Moreover
$\normL{u}{1}{D} \leq \normL{\phi}{1}{M} + \normL{g}{\infty}{D} \leq
const$ by Step 1 and property 5 of $g$ and $f > const \cdot f_0$ by
property 5 of $g$. Hence $$\normL{\phi}{\infty}{M} \leq const +
const \cdot \normL{f}{\infty}{D}^4,$$ which finishes the proof.
\end{proof}


\subsection{The ABP inequality}

We would like to use the following lemma in the proof.

\begin{lemma}\label{key lemma}
Let $D \subset \HH^n$ be a bounded domain. Let $u$ be a negative
$C^2$ strictly plurisubharmonic function in $D$ and $a > 0$ a
constant such that the set $\{u < \inf_D u + a\}$ is relatively
compact in $D$. Denote by $f$ the Moore determinant $f = det (
\frac{\del^2 u}{\del \bar q_i \del q_j})$. Then
$$\normL{u}{\infty}{D} \leq a + const \cdot
\frac{\mathrm{diam}(D)^{4n}}{a^{4n}} \normL{u}{1}{D}
\normL{f}{\infty}{D}^4.$$
\end{lemma}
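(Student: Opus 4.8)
The plan is to reduce the statement to a real-variable Alexandrov--Bakelman--Pucci estimate on the domain $D\subset\HH^n=\R^{4n}$, in the spirit of Blocki, the only genuinely quaternionic ingredient being a pointwise comparison between the real Monge--Amp\`ere determinant of $u$ and the Moore determinant $f$. Recall from the discussion preceding the definition of a hyper-Hermitian matrix that the realification ${}^{\R}Hess_\HH(u)=\av{D^2u}_u$ is the $SU(2)$-average of the real Hessian quadratic form, and from the Proposition on the Moore determinant that $f^4=\det_\R({}^{\R}Hess_\HH(u))$. First I would establish the \emph{key inequality}: at every point of the lower contact set, where $D^2u\geq 0$, one has $\det_\R D^2u\leq f^4$. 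Indeed, the action $x\mapsto x\circ L$ of $L\in SU(2)$ is by orthogonal transformations of determinant $1$, so the quadratic form $D^2u(\cdot\circ L)$ is again positive semidefinite with $\det_\R\big(D^2u(\cdot\circ L)\big)=\det_\R D^2u$ for every $L$; writing $\av{D^2u}_u=\int_{SU(2)}D^2u(\cdot\circ L)\,d\nu(L)$ and applying the Minkowski determinant inequality (concavity of $A\mapsto(\det_\R A)^{1/4n}$ on the positive semidefinite cone) gives
$$\big(\det_\R \av{D^2u}_u\big)^{1/4n}\geq \int_{SU(2)}\big(\det_\R D^2u(\cdot\circ L)\big)^{1/4n}\,d\nu(L)=\big(\det_\R D^2u\big)^{1/4n},$$
so that $\det_\R D^2u\leq \det_\R({}^{\R}Hess_\HH(u))=f^4$ on the contact set.

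Next comes the normal-mapping covering argument. Write $m=\inf_D u$ and $M=\normL{u}{\infty}{D}=-m$; if $M\leq a$ the estimate is trivial, so assume $M>a$. Let $\Omega_a=\{u<m+a\}$, which is relatively compact in $D$ by hypothesis, and set $E'=\{x\in\Omega_a:D^2u(x)\geq 0\}$. I would show that the gradient map sends $E'$ onto a set containing the ball $B(0,a/\mathrm{diam}(D))$: for each $p$ with $|p|<a/\mathrm{diam}(D)$ the function $u-\langle p,\cdot\rangle$ takes strictly larger values on $\partial\Omega_a$ (where $u=m+a$) than at the minimum point of $u$, since there $u-\langle p,\cdot\rangle-(m-\langle p,x_0\rangle)\geq a-|p|\,\mathrm{diam}(D)>0$; hence its minimum over $\overline{\Omega_a}$ is attained at an interior point $x_p$, where $\nabla u(x_p)=p$ and $D^2u(x_p)\geq 0$, so $x_p\in E'$. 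The change-of-variables (area) formula then yields $\omega_{4n}\big(a/\mathrm{diam}(D)\big)^{4n}\leq|\nabla u(E')|\leq\int_{E'}\det_\R D^2u$.

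The final step extracts the $L^1$ norm. On $E'\subset\Omega_a$ one has $-u>M-a>0$, hence $\normL{u}{1}{D}\geq\int_{E'}(-u)\geq(M-a)\,|E'|$. Combining the area estimate with the determinant comparison from the first paragraph, $\int_{E'}\det_\R D^2u\leq \normL{f}{\infty}{D}^4\,|E'|$, we obtain
$$\omega_{4n}\Big(\frac{a}{\mathrm{diam}(D)}\Big)^{4n}\leq \normL{f}{\infty}{D}^4\,|E'|\leq \normL{f}{\infty}{D}^4\,\frac{\normL{u}{1}{D}}{M-a}.$$
Rearranging gives $M\leq a+\mathrm{const}\cdot\frac{\mathrm{diam}(D)^{4n}}{a^{4n}}\,\normL{u}{1}{D}\,\normL{f}{\infty}{D}^4$ with $\mathrm{const}=\omega_{4n}^{-1}$, which is the asserted bound.

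I expect the determinant comparison $\det_\R D^2u\leq f^4$ on the contact set to be the main conceptual obstacle: it is precisely what converts the real ABP inequality, whose natural right-hand side is the real Monge--Amp\`ere mass $\int_{E'}\det_\R D^2u$, into a bound by the Moore determinant, and it relies on the two facts that $SU(2)$ acts through $SO(4n)$ and that the determinant is concave on positive semidefinite matrices. The remaining real-variable steps are standard ABP theory, the only twist being Blocki's device of recovering $\normL{u}{1}{D}$ from the lower bound $-u>M-a$ on the contact set, which replaces the usual imposition of boundary data.
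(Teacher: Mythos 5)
Your proof is correct and follows the same overall architecture as the paper's --- an Alexandrov--Bakelman--Pucci normal-mapping estimate on the sublevel set $\{u<\inf_D u+a\}$, a pointwise comparison between $\det_\R D^2u$ and the fourth power of the Moore determinant on the lower contact set, and Blocki's device of bounding $\mathrm{Vol}(\{u<\inf_D u+a\})$ by $\normL{u}{1}{D}/(\normL{u}{\infty}{D}-a)$ --- but it differs in two places worth recording. First, the paper proves the key pointwise inequality $\det_\R D^2u\leq const\cdot f^4$ by diagonalizing the quaternionic Hessian: in the orthonormal basis $w^l, w^l\circ I, w^l\circ J, w^l\circ K$ its eigenvalues are the averages $\frac14(D^2u(w^l)+D^2u(w^l\circ I)+D^2u(w^l\circ J)+D^2u(w^l\circ K))$, and one concludes by AM--GM together with Hadamard's bound of $\det_\R D^2u$ by the product of diagonal entries. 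Your route via the $SU(2)$-invariance $\det_\R\bigl(D^2u(\cdot\circ L)\bigr)=\det_\R D^2u$ and the Minkowski determinant inequality (concavity of $(\det)^{1/4n}$ on the positive semidefinite cone, applied to the average $\av{D^2u}_u$) is a clean alternative that avoids choosing an eigenbasis; the only caveat is that the identification of ${}^{\R}Hess_\HH(u)$ with $\av{D^2u}_u$ and of $f^4$ with its real determinant holds only up to a dimensional normalization constant, so your inequality should read $\det_\R D^2u\leq const\cdot f^4$ --- harmless, since the statement carries a $const$ anyway, but it means your claimed value $const=\omega_{4n}^{-1}$ is not literal. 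Second, the paper factors the argument through an intermediate Proposition (the ABP bound $\normL{v}{\infty}{D'}\leq const\cdot\mathrm{diam}(D')\normL{f}{4}{D'}^{1/n}$ for $v$ vanishing on $\del D'$, cited from Gilbarg--Trudinger, Lemma 9.2) applied to $v=u-\inf_Du-a$ on $D'=\{v<0\}$, whereas you inline the normal-mapping covering argument directly; the two are equivalent, and your version has the merit of being self-contained while the paper's is more modular.
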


This lemma is a formal consequence of the following proposition:

\begin{prop}\label{key proposition}
Let $D \subset \HH^n$ be a bounded domain. Let $u\in C^2(D)\cap
C(\overline{D})$ be a non-positive strictly plurisubharmonic
function in $D$, that vanishes on $\del D$. Denote by $f$ the Moore
determinant $f = det ( \frac{\del^2 u}{\del\bar q_i \del
q_j})$. Then
$$\normL{u}{\infty}{D} \leq const \cdot \mathrm{diam}(D)
\normL{f}{4}{D}^{1/n}.$$
\end{prop}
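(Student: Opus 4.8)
The plan is to deduce the estimate from the classical Alexandrov--Bakelman--Pucci (ABP) maximum principle on the underlying real domain $D \subset \R^{4n}$, reducing everything to a single pointwise comparison between the real Hessian determinant $\det{}_\R D^2 u$ and the fourth power of the Moore determinant $f = \det Hess_\HH(u)$. This is exactly the scheme by which Blocki \cite{BlockiUniform} treats the complex Monge--Amp\`ere operator, with the complex Hessian replaced by $Hess_\HH(u)$. Throughout set $m = 4n = \dim_\R D$; since $u \le 0$ and $u|_{\del D}=0$ we have $\normL{u}{\infty}{D} = \sup_D(-u)$.

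First I would recall the ABP inequality in its Monge--Amp\`ere form: for $u\in C^2(D)\cap C(\overline D)$ with $u\le 0$ and $u|_{\del D}=0$,
$$\sup_D(-u) \le C_m\,\mathrm{diam}(D)\Big(\int_{\Gamma}\det{}_\R D^2 u\,dx\Big)^{1/m},$$
where $\Gamma=\{u=\Gamma_u\}$ is the contact set of $u$ with its convex envelope $\Gamma_u$, on which $D^2 u\ge 0$. Applying this to our $u$ reduces the whole statement to controlling $\int_\Gamma \det{}_\R D^2 u$, and the only place the quaternionic structure enters is the pointwise bound
$$\det{}_\R D^2 u(x) \le f(x)^4 \qquad \text{wherever } D^2 u(x)\ge 0.$$
Granting this, I obtain $\int_\Gamma \det{}_\R D^2 u \le \int_D f^4 = \normL{f}{4}{D}^4$, whence $\normL{u}{\infty}{D}\le C_m\,\mathrm{diam}(D)\,(\normL{f}{4}{D}^4)^{1/(4n)} = const\cdot\mathrm{diam}(D)\,\normL{f}{4}{D}^{1/n}$, which is precisely the claimed exponent $1/n$.

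It remains to establish the pointwise inequality, which I regard as the crux. Fix $x$ with $S := D^2 u(x)\ge 0$, viewed as a positive semidefinite real symmetric $4n\times 4n$ matrix. By the remark following the definition of $Hess_\HH$ together with Lemma \ref{average over SU(2) of a quadratic form}, the real matrix representing $Hess_\HH(u)$ is the $SU(2)$-average
$$^\R\!\big(Hess_\HH(u)\big) = \tfrac14\big(S + R_I^{T} S R_I + R_J^{T} S R_J + R_K^{T} S R_K\big),$$
where $R_L$ denotes right multiplication by $L \in \{I,J,K\}$, an orthogonal transformation of $\R^{4n}$; each summand is thus positive semidefinite with real determinant equal to $\det{}_\R S$. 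By Minkowski's determinant inequality (concavity of $\det{}^{1/m}$ on positive semidefinite matrices),
$$\det{}_\R\!\big(^\R(Hess_\HH u)\big)^{1/m} \ge \tfrac14\sum_{L}(\det{}_\R S)^{1/m} = (\det{}_\R S)^{1/m},$$
so $\det{}_\R(^\R(Hess_\HH u)) \ge \det{}_\R S$. Since $\big(\det Hess_\HH(u)\big)^4 = \det{}_\R(^\R(Hess_\HH u))$ by the defining property $P^4 = \det|_{\mathcal H_n}$ of the Moore determinant, this reads $f^4 \ge \det{}_\R D^2 u$, as required.

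The main obstacles I anticipate are bookkeeping rather than conceptual. First, one must match the two realifications --- the matrix of the averaged quadratic form $\av{D^2 u}_u$ and the algebra-embedding image $^\R(Hess_\HH u)\in\mathcal H_n$ --- so that the identity $(\det Hess_\HH u)^4 = \det{}_\R(^\R(Hess_\HH u))$ genuinely applies; for this I would invoke the conventions of \cite{alesker-bsm}. Second, the determinant comparison must be invoked only on the contact set $\Gamma$, where $D^2 u\ge 0$ supplies the positive semidefiniteness needed for Minkowski --- off $\Gamma$ the inequality may fail, but ABP never looks there. The Minkowski step itself is the elegant heart of the argument, and it is precisely there that the exponent $4$ (equivalently, the averaging over the four elements $\{1,I,J,K\}$) matches the passage from $\normL{f}{4}{D}$ to $\normL{f}{4}{D}^{1/n}$.
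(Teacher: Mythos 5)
Your proposal is correct and follows the same global scheme as the paper --- the real ABP inequality on $D\subset\R^{4n}$, reduced to the pointwise comparison $\det D^2u\le const\cdot f^4$ on the contact set, yielding the exponent $1/n$ --- but the crux, the pointwise comparison itself, is argued by a genuinely different route. The paper diagonalizes the quaternionic Hessian: taking unit eigenvectors $w^1,\dots,w^n$ of $\bigl(\frac{\del^2u}{\del q_i\del\overline{q_j}}\bigr)$, the vectors $w^l\circ L$, $L\in\{1,I,J,K\}$, form an orthonormal basis of $\R^{4n}$, each Moore eigenvalue equals $const\cdot\sum_L D^2u(w^l\circ L)$, and AM--GM followed by Hadamard's inequality (the determinant of a nonnegative symmetric matrix is at most the product of its diagonal entries in that basis) gives $f\ge const\cdot(\det D^2u)^{1/4}$. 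You avoid diagonalization entirely: you realify, write $^\R(Hess_\HH u)$ as the average of the four orthogonal conjugates $R_L^{T}(D^2u)R_L$ of the positive semidefinite matrix $D^2u$, apply Minkowski's determinant inequality (concavity of $\det^{1/4n}$ on the positive cone), and convert back through the defining identity $P^4=\det|_{\mathcal{H}_n}$ of the Moore determinant. Both arguments are valid and of comparable length; yours buys independence from the spectral theory of hyper-Hermitian matrices (no need for the eigenbasis fact), at the price of invoking Minkowski's inequality and of the realification bookkeeping you yourself flag: the Gram matrix of the averaged quadratic form $\av{D^2u}_u$ and the algebra-embedding image $^\R(Hess_\HH u)$ agree only up to a universal normalization constant (note that the paper's own chain of equalities carries a $const$ at exactly the same spot, coming from the conventions for $\frac{\del}{\del q}$, $\frac{\del}{\del\overline{q}}$), so your clean inequality $f^4\ge\det D^2u$ should be read as $f^4\ge const\cdot\det D^2u$ --- harmless, since the statement of the proposition allows a dimensional constant.
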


We will first show how the lemma follows from the proposition and then prove the latter.

\begin{proof}(Lemma \ref{key lemma})
Define $v:= u -\inf_D u -a$. Set $D'=\{v < 0\}$. Then $D'\Subset D$
by assumption. By Proposition \ref{key proposition},
$$a = \normL{v}{\infty}{D'} \leq const \cdot \mathrm{diam}(D') \normL{f}{4}{D'}^{1/n} \leq const \cdot \mathrm{diam}(D')(Vol(D'))^{1/4n} \normL{f}{\infty}{D'}^{1/n}.$$
On the other hand $$Vol(D') \leq \frac{\normL{u}{1}{D}}{|\inf_D u + a|} = \frac{\normL{u}{1}{D}}{\normL{u}{\infty}{D} - a}.$$ The lemma now follows by
direct substitution.
\end{proof}

\begin{proof}(Proposition \ref{key proposition}) By the Alexandrov-Bakelman-Pucci inequality (Lemma 9.2 in \cite{GilbargTrudinger})
we have
\begin{equation}\label{Alexandrov-Bakelman-Pucci}\normL{u}{\infty}{D} \leq const \cdot \mathrm{diam}(D) (\int_\Gamma det D^2 u)^{1/4n},\end{equation}
where $$\Gamma := \{x \in D |\; u(x) + \langle Du(x),y-x \rangle
\leq u(y) \;\forall y\in D\} \subset \{D^2 u \geq 0\}.$$

Using quaternionic linear algebra (cf. \cite{alesker-bsm} and the
references therein) if $w^1,...,w^n$ are a hyper-Hermitian orthonormal $\HH$-base of eigenvectors of
$\frac{\del^2 u}{\del\bar q_i \del q_j}$ then
$w^1,...,w^n,w^1\circ I,...,w^n\circ I,w^1\circ J,...,w^n\circ
J,w^1\circ K,...,w^n \circ K$ form an orthonormal basis in $\R^{4n}
\cong \HH^n$ and at a point where $D^2u \geq 0$ we obtain
$$det(\frac{\del^2 u}{\del\bar q_i \del q_j}) = \prod_{l=1}^n
\sum_{i,j =1}^n \bar w^l_i \frac{\del^2 u}{\del\bar q_i \del q_j}
w^l_j = const \cdot \prod_{l=1}^n (D^2 u (w^l) + D^2 u
(w^l \circ I) + D^2 u (w^l \circ J) + D^2 u (w^l \circ K))$$ $$\geq
const \prod_{l=1}^n \sqrt[4]{(D^2 u (w^l)) (D^2 u (w^l \circ I)) (D^2 u (w^l \circ
J)) (D^2 u (w^l \circ K))} \geq const \sqrt[4]{det D^2 u}.$$ The
last inequality follows from the fact that the determinant of a real
nonnegative symmetric matrix does not exceed the product of its
diagonal entries. Thus at a point where $D^2 u \geq 0$ we have
\begin{equation}\label{det D^2 and det quaternionic Hess} det D^2 u
\leq const \cdot det(\frac{\del^2 u}{\del\bar q_i \del
q_j})^4.\end{equation} Plugging this into
(\ref{Alexandrov-Bakelman-Pucci}) yields the proposition.
\end{proof}

\subsection{The $L^1$-bound}\label{L1 bound section}

Let $(M^{4n},I,J,K)$ be a compact hypercomplex manifold. Let us fix
$\Ome\in \Lam^{2,0}_{I,\RR}(M)$ which is real and strictly positive (we do
{\itshape not} assume that $\Ome$ is HKT, i.e. $\pt\Ome$ may not
vanish). The following result is a version of the Gauduchon theorem
\cite{gauduchon-77}; the proof follows closely the lines of his arguments.

\begin{prop}\label{P:gauduchon}
There exists a unique, up to a positive multiplicative constant,
form $\Theta \in \Lam^{2n,0}_{I,\RR}$ which is strictly positive pointwise (in
particular non-vanishing) and
$$\pt\pt_J(\Ome^{n-1}\wedge\bar\Theta)=0.$$
\end{prop}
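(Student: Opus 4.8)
The plan is to mimic Gauduchon's proof of the existence of the Gauduchon metric, transporting his continuity/fixed-point argument from the complex $\del\delbar$-setting to the quaternionic $\del\del_J$-setting. I would begin by reformulating the problem analytically. Writing a candidate form as $\Theta = e^\psi \Theta_0$ for a fixed reference strictly positive form $\Theta_0 \in \Lam^{2n,0}_{I,\RR}$ and an unknown function $\psi$, the equation $\pt\pt_J(\Ome^{n-1}\wedge\overline{\Theta})=0$ becomes a second-order linear elliptic PDE in $\psi$. More precisely, since $\Ome^{n-1}\wedge\overline{\Theta}$ is (up to the real structure) a top-degree form, $\pt\pt_J$ applied to it lands in a line bundle, and after trivializing by $\Theta_0$ (and its conjugate) the operator $\psi \mapsto \pt\pt_J(\Ome^{n-1}\wedge \overline{e^\psi\Theta_0})$ can be written, via the product rule for $\pt$ and $\pt_J$ together with the identity $\pt\pt_J = -\pt_J\pt$, as $e^\psi(L\psi + \av{\text{first order}})$ where $L$ is a second-order elliptic operator whose principal part is the quaternionic Laplacian determined by $\Ome$.

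The key step is then to identify this as a linear elliptic operator $P$ of the form $P\psi = a^{ij}\partial_i\partial_j \psi + b^i \partial_i \psi$ (in local flat coordinates) with no zeroth-order term, and to observe two facts: first, $P$ annihilates constants, so $P$ has a nontrivial kernel; second, I must show that the cokernel is one-dimensional and spanned by a strictly positive density, so that the formal adjoint $P^*$ has a positive kernel element. This is exactly the structure Gauduchon exploits. Concretely I would invoke the following chain: $P$ is elliptic with smooth coefficients on the compact manifold $M$, so by the Fredholm theory for elliptic operators it has finite-dimensional kernel and cokernel of equal index. Since $P1 = 0$ (constants are in the kernel) and the strong maximum principle applies to an operator with no zeroth-order term, the kernel of $P$ is exactly the constants, hence one-dimensional. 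Therefore the cokernel is also one-dimensional, and by standard duality the adjoint equation $P^*\Theta' = 0$ has a one-dimensional solution space. The Krein-Rutman theorem (or Gauduchon's direct maximum-principle argument applied to the adjoint) then guarantees that the spanning solution can be chosen strictly positive, yielding the desired $\Theta$ up to a positive multiplicative constant, which is precisely the uniqueness claim.

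The main obstacle I anticipate is \emph{verifying that the relevant operator really has no zeroth-order term and that the maximum principle genuinely applies}. In the complex Gauduchon setting this is a clean computation; here I must carry out the analogous computation for $\pt\pt_J$ acting on $\Ome^{n-1}\wedge\overline{\Theta}$, keeping careful track of the $SU(2)$-averaging and the real-structure condition $J\om=\overline{\om}$ that defines $\Lam^{2n,0}_{I,\RR}$. The subtlety is that $\Ome$ is \emph{not} assumed HKT, so $\pt\Ome\neq 0$, and the lower-order terms coming from differentiating $\Ome^{n-1}$ must be organized to confirm they contribute only first-order (not zeroth-order) terms to $P$; otherwise the maximum principle and the positivity of the adjoint kernel fail. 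I would handle this by working in a local hypercomplex chart modeled on $\HH^n$ (where the Obata connection is flat and $\pt\pt_J$ has the explicit quaternionic-Hessian form), expanding $\pt\pt_J(\Ome^{n-1}\wedge\overline{e^\psi\Theta_0})$ there, and checking coordinate-invariantly that the coefficient of $\psi$ itself vanishes.

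Finally, I would assemble the pieces: fix the reference $\Theta_0$, write down $P$ explicitly and confirm ellipticity and the absence of a zeroth-order term, apply Fredholm theory plus the maximum principle to pin down $\dim\ker P = 1$, pass to the adjoint to produce a solution $\Theta$, invoke positivity (Krein-Rutman or Gauduchon's argument) to make it strictly positive, and note that strict positivity is an open condition preserved under the normalization. Uniqueness up to scale follows from the one-dimensionality of the adjoint kernel together with the positivity, since two positive solutions of $P^*\Theta'=0$ must be proportional. The smoothness of $\Theta$ follows from elliptic regularity for $P^*$.
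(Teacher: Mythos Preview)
You have the right overall strategy (Gauduchon's Fredholm-plus-maximum-principle argument), but you have swapped the roles of the operator and its adjoint, and as a consequence one of your key steps will fail. Writing $\Theta = g\,\Theta_0$ (the equation is already linear in $g$; the substitution $g=e^\psi$ only introduces an irrelevant nonlinearity), the operator whose kernel you are after is
\[
A^* g \;=\; \frac{\pt\pt_J\bigl(g\,\Ome^{n-1}\wedge\overline{\Theta_0}\bigr)}{\Ome^n\wedge\overline{\Theta_0}}.
\]
Expanding by the Leibniz rule produces the zeroth-order term $g\cdot\pt\pt_J(\Ome^{n-1}\wedge\overline{\Theta_0})$, which is generically nonzero precisely because $\Ome$ is not assumed HKT. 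So $A^*$ does \emph{not} annihilate constants, the check you anticipate (``that the coefficient of $\psi$ itself vanishes'') fails, and you cannot use the maximum principle to conclude $\ker A^*$ consists of constants. Your argument also becomes internally inconsistent: you set up $P$ so that $\ker P$ gives $\Theta$, but then claim $\Theta$ arises from $\ker P^*$.

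The repair, which is what the paper does, is to start from the \emph{other} operator
\[
A f \;=\; \frac{\pt\pt_J f\wedge\Ome^{n-1}\wedge\overline{\Theta_0}}{\Ome^n\wedge\overline{\Theta_0}},
\]
for which $A(1)=0$ is immediate with no computation. The maximum principle then gives $\ker A=\RR$, index zero gives $\dim\ker A^*=1$, and $\Theta:=G\cdot\Theta_0$ for a generator $G$ of $\ker A^*$. For the strict positivity of $G$ the paper does not invoke Krein--Rutman: it first shows that $\int_M \phi\,G\,(\Ome^n\wedge\overline{\Theta_0})\neq 0$ for every smooth $\phi>0$ (by replacing $\Ome$ with $\phi\,\Ome$ and rerunning the argument that $\langle G,1\rangle\neq 0$), which forces $G$ to have constant sign, and then appeals to Gauduchon's lemma that a nonnegative solution of a second-order linear elliptic equation is either strictly positive or identically zero.
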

\begin{proof} Let us fix a positive non-vanishing form $\Theta_0\in
\Lam^{2n,0}_{I,\RR}$ (e.g. $\Ome^n$). Let us consider the operator $A$ on real functions
on $M$ given by
\begin{eqnarray}\label{D:A}
A f:=\frac{\pt\pt_J f\wedge
\Ome^{n-1}\wedge\bar\Theta_0}{\Ome^n\wedge\bar\Theta_0}.
\end{eqnarray} Clearly $A$ is elliptic and $A(1)=0$.

Define a positive definite scalar product on functions by
\begin{eqnarray}
<f,g>=\int_M fg\cdot \Ome^{n}\wedge\bar \Theta_0.
\end{eqnarray}
Let us compute the conjugate operator $A^*$ with respect to this
product:
\begin{eqnarray*}
<Af,g>=\int (\pt\pt_Jf) \cdot g\wedge \Ome^{n-1}\wedge\bar
\Theta_0=\int f\cdot \pt\pt_J(g\cdot \Ome^{n-1}\wedge\bar
\Theta_0)=<f, \frac{\pt\pt_J(g\cdot \Ome^{n-1}\wedge\bar
\Theta_0)}{\Ome^{n}\wedge\bar \Theta_0}>.
\end{eqnarray*}
Hence
\begin{eqnarray}\label{A*}
A^*g=\frac{\pt\pt_J(g\cdot \Ome^{n-1}\wedge\bar
\Theta_0)}{\Ome^{n}\wedge\bar \Theta_0}.
\end{eqnarray}

Since the operators $A$ and $A^*$ have the same symbol, their
indices vanish. By the maximum principle the kernel of $A$ consists
only of constant functions. Hence the kernel of $A^*$ is one
dimensional. Let us denote by $G$ a generator of the kernel of
$A^*$. If we show that $G$ never vanishes then $\Theta:=\pm G\cdot
\Theta_0$ will satisfy the proposition.

\begin{lemma}\label{L:integral}
$$<G,1>\ne 0.$$
\end{lemma}
\begin{proof} Assume the contrary. Then $1\in (Ker A^*)^\perp=Im A$.
Then there exists a function $f$ such that $Af=1$. Let $x_0$ be a
point of maximum of $f$. Then since $A(1)=0$ we get
$$1=Af(x_0)\leq 0.$$
This is a contradiction. The lemma is proved. \end{proof}

\hfill

Let us now show that $G$ cannot change sign. Let $\phi>0$ be any
positive function. Put $\Ome_\phi:=\phi\cdot \Ome$.
Define the operator $A_\phi$ exactly as $A$ but with $\Ome_\phi$
instead of $\Ome$:
\begin{eqnarray*}
A_\phi f=\frac{\pt\pt_J f\wedge
\Ome^{n-1}_\phi\wedge\bar\Theta_0}{\Ome^n_\phi\wedge\bar\Theta_0}=\phi^{-1}\cdot
Af.
\end{eqnarray*}
Let us compute the adjoint $A_\phi^*$ with respect to the pairing
$<\cdot,\cdot>_\phi$ which is defined by
$$<f,g>_\phi=\int_M fg \cdot \Ome^{n}_\phi\wedge\bar \Theta_0.$$
Similarly to (\ref{A*}), we have
$$A_\phi^* g=\frac{\pt\pt_J(g\cdot \Ome^{n-1}_\phi\wedge\bar
\Theta_0)}{\Ome^{n}_\phi\wedge\bar \Theta_0}.$$

By assumption $A^*G=0$, namely $\pt\pt_J(G\cdot \Ome^{n-1}\wedge\bar
\Theta_0)=0$. Trivially
$$0=\pt\pt_J(\phi^{1-n}G\cdot \Ome^{n-1}_\phi\wedge\bar
\Theta_0)=A^*_\phi(\phi^{1-n}\cdot G)\cdot
(\Ome^n_\phi\wedge\bar\Theta_0).$$ Hence $\phi^{1-n}G\in Ker
A_\phi^*$.

Let us apply Lemma \ref{L:integral} to $\phi^{1-n}\cdot G$ instead
of $G$ and to $<\cdot,\cdot>_\phi$ instead of $<\cdot,\cdot>$. We
have
$$0\ne <\phi^{1-n}\cdot G,1>_\phi=\int \phi^{1-n} G\cdot \Ome^n_\phi\wedge\bar \Theta_0=\int \phi\cdot G\cdot
\Ome^n\wedge\bar\Theta_0.$$ Thus we have shown that the function $G$
is such that for any $\phi>0$
$$\int \phi G\cdot (\Ome^n\wedge\bar\Theta_0)\ne 0.$$
This implies easily that $G$ cannot change its sign.

\hfill

It remains to show that $G$ cannot vanish at any point. This
immediately follows from the following general lemma.
\begin{lemma}[\cite{gauduchon-77}, Lemma 2]\label{L:general-pde}
Any non-negative solution of a linear elliptic differential equation
of second order with $C^\infty$-smooth real coefficients is either
strictly positive or vanishes identically.
\end{lemma}
Thus Proposition \ref{P:gauduchon} is proved. \end{proof}

\hfill

\begin{lemma}\label{L:green}
Let $(M^{4n}, I,J,K)$ be a compact hypercomplex manifold with an
HKT-form $\Ome$. Let $\Theta\in \Lam_{I,\RR}^{2n,0}$ be a positive
form as in Proposition \ref{P:gauduchon}, i.e.
$\pt\pt_J(\Ome^{n-1}\wedge \bar\Theta)=0$. Let
$$A\phi=\frac{\pt\pt_J\phi\wedge \Ome^{n-1}\wedge \bar\Theta}{\Ome^n\wedge\bar\Theta}.$$
Then the operator $A$ admits a non-negative Green function
$G(x,y)\geq 0$, namely
$$-\int G(x,y)\cdot A\phi(y)\cdot(\Ome^n\wedge\bar\Theta)=\phi(x)-\frac{1}{\text{vol}(M)} \int \phi\cdot(\Ome^n\wedge\bar\Theta)$$
for any function $\phi$ and point $x$ (here $\text{vol}(M)=\int \Ome^n\wedge\bar\Theta$).
\end{lemma}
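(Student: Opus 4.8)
The plan is to construct the Green function for the operator $A$ by the standard elliptic-operator machinery, the only subtlety being that $A$ is not self-adjoint and annihilates constants, so I must work modulo constants and use the positive form $\Theta$ from Proposition \ref{P:gauduchon} to make the operator formally self-adjoint in a weighted pairing. First I would observe that the hypothesis $\pt\pt_J(\Ome^{n-1}\wedge\bar\Theta)=0$ is precisely the condition that makes $A$ self-adjoint (up to the weight $\Ome^n\wedge\bar\Theta$): repeating the integration-by-parts computation from the proof of Proposition \ref{P:gauduchon} with $\Theta$ in place of $\Theta_0$, one finds
\begin{eqnarray*}
A^*g=\frac{\pt\pt_J(g\cdot\Ome^{n-1}\wedge\bar\Theta)}{\Ome^n\wedge\bar\Theta}=\frac{g\cdot\pt\pt_J(\Ome^{n-1}\wedge\bar\Theta)+(\text{lower order in }g)}{\Ome^n\wedge\bar\Theta},
\end{eqnarray*}
and the vanishing of the top term forces $A^*=A$ with respect to the pairing $\langle f,h\rangle_\Theta=\int_M fh\cdot\Ome^n\wedge\bar\Theta$. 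Hence $A$ is a formally self-adjoint second-order elliptic operator on the compact manifold $M$ whose kernel, by the maximum principle, consists exactly of the constants.

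\emph{Second}, I would invoke the standard Fredholm theory for such operators. Since $A$ is self-adjoint with one-dimensional kernel spanned by the constant function $1$, its image is the $L^2(\Ome^n\wedge\bar\Theta)$-orthogonal complement of the constants, i.e. the functions with zero mean against $\Ome^n\wedge\bar\Theta$. Let $P$ denote the orthogonal projection onto this complement, so $P\psi=\psi-\frac{1}{\mathrm{vol}(M)}\int\psi\cdot(\Ome^n\wedge\bar\Theta)$. Then $A$ is an isomorphism from $\{$mean-zero$\}$ to itself, and its inverse has a Schwartz kernel $G(x,y)$ realizing the identity
\[
-\int G(x,y)\cdot A\phi(y)\cdot(\Ome^n\wedge\bar\Theta)=\phi(x)-\frac{1}{\mathrm{vol}(M)}\int\phi\cdot(\Ome^n\wedge\bar\Theta),
\]
which is exactly the asserted formula; smoothness of $G$ off the diagonal and its integrable singularity on the diagonal are the usual parametrix facts for elliptic operators.

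\emph{Third and hardest}, I would establish the sign condition $G(x,y)\geq 0$. This is the only part that is not pure formalism, and I expect it to be the main obstacle. The cleanest route is the maximum principle: the defining equation says that for fixed $x$, solving $A\phi=\delta_x-\frac{1}{\mathrm{vol}(M)}$ (in the distributional, weighted sense) gives $\phi=-G(x,\cdot)$ up to an additive constant, and away from the pole $y=x$ one has $A_y G(x,y)\leq 0$ wherever $G$ is regular; combined with the positive singularity at the diagonal, the strong maximum principle (in the form already cited as Lemma \ref{L:general-pde}) prevents $G$ from dipping below the value it is normalized to, forcing $G\geq 0$ after the correct choice of additive constant. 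Concretely I would fix the free constant in $G$ so that $\min_y G(x,y)=0$ and then argue that if $G$ were negative somewhere it would attain a negative interior minimum at a regular point, contradicting the elliptic equation $A_yG=-\frac{1}{\mathrm{vol}(M)}<0$ there via the maximum principle. One must be careful that the weight $\Ome^n\wedge\bar\Theta$ is everywhere strictly positive—this is guaranteed by the strict positivity of $\Theta$ in Proposition \ref{P:gauduchon}—so that $A$ has no zeroth-order term and the comparison argument applies cleanly. This positivity of the Green function is the substantive output of the lemma and the step where the HKT/Gauduchon structure is genuinely used.
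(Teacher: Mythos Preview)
Your first step contains a genuine error: the Gauduchon condition $\pt\pt_J(\Ome^{n-1}\wedge\bar\Theta)=0$ does \emph{not} force $A^*=A$. Expanding $\pt\pt_J(g\cdot\Ome^{n-1}\wedge\bar\Theta)$ by Leibniz produces, besides $\pt\pt_J g\wedge\Ome^{n-1}\wedge\bar\Theta$ and $g\cdot\pt\pt_J(\Ome^{n-1}\wedge\bar\Theta)$, two first-order cross terms of the shape $\pt g\wedge\pt_J(\Ome^{n-1}\wedge\bar\Theta)$ and $\pt_J g\wedge\pt(\Ome^{n-1}\wedge\bar\Theta)$. The Gauduchon condition kills only the zeroth-order piece; the first-order terms survive in general (nothing forces $\pt(\Ome^{n-1}\wedge\bar\Theta)$ or $\pt_J(\Ome^{n-1}\wedge\bar\Theta)$ to vanish). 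What the condition actually buys you is $A^*(1)=0$, i.e.\ the kernel of $A^*$ contains the constants, or equivalently $\int A\phi\cdot(\Ome^n\wedge\bar\Theta)=0$ for every $\phi$. This weaker statement is all that is needed, and your self-adjointness claim is both false and unnecessary.

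Your third step, the positivity argument, also fails. First, the equation satisfied by $G(x,\cdot)$ in the $y$-variable involves $A^*_y$, not $A_y$, and the sign is $A^*_y G(x,y)=+\frac{1}{\mathrm{vol}(M)}>0$ away from $y=x$. At an interior minimum of $G(x,\cdot)$ one has $A^*_y G\geq 0$ (since $A^*$ has no zeroth-order term and the gradient vanishes), which is \emph{consistent} with $A^*_yG=\frac{1}{\mathrm{vol}(M)}$; there is no contradiction. Moreover, your proposed normalisation $\min_y G(x,y)=0$ presupposes that $G(x,\cdot)$ is bounded below, which is precisely the point at issue, and even granting it you would need the resulting shifts $c(x)$ to be uniformly bounded in $x$.

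The paper's route is much shorter and avoids both problems. One first establishes, by a parametrix construction for an arbitrary second-order elliptic operator on a compact manifold (this is the content of the Appendix), that $A$ admits a Green function $G_0$ that is bounded \emph{below} on $M\times M$. Then one uses only the consequence $\int A\phi\cdot(\Ome^n\wedge\bar\Theta)=0$ of the Gauduchon condition: this means adding any constant to $G_0$ preserves the Green identity, so adding a sufficiently large constant yields $G\geq 0$. The HKT/Gauduchon structure enters solely through this integral identity, not through any self-adjointness or direct maximum-principle argument on $G$.
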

\begin{proof} Let $G_0$ be a Green function bounded from below (as in Appendix \ref{Green function section}). However we can add to $G_0$ a large
constant. Indeed
\begin{eqnarray*}
\int A\phi(y)\cdot(\Ome^n\wedge\bar\Theta)=\int \pt\pt_J\phi \wedge
\Ome^{n-1}\wedge\bar \Theta=\int \phi \cdot
\pt\pt_J(\Ome^{n-1}\wedge\bar \Theta)=0.
\end{eqnarray*}
\end{proof}

\begin{prop}\label{P:L1}
Assume that $\Ome +\pt\pt_J\phi >0$ and $\max_M\phi=0$. Then
$||\phi||_{L^1}$ is bounded by a constant depending on
$(M,I,J,K,\Ome)$ only.
\end{prop}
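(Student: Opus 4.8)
The plan is to combine the Green representation of Lemma \ref{L:green} with a pointwise lower bound on the operator $A$ coming from the positivity hypothesis. Throughout I denote by $\mu$ the positive volume form $\Ome^n\wedge\bar\Theta$, set $V:=\text{vol}(M)=\int_M \mu$, and write $\bar\phi:=\frac{1}{V}\int_M \phi\,d\mu$ for the average of $\phi$. Here $\Theta\in\Lam^{2n,0}_{I,\RR}$ is the positive form furnished by Proposition \ref{P:gauduchon}, so that $\pt\pt_J(\Ome^{n-1}\wedge\bar\Theta)=0$ and Lemma \ref{L:green} applies to the operator
$$A\phi=\frac{\pt\pt_J\phi\wedge\Ome^{n-1}\wedge\bar\Theta}{\Ome^n\wedge\bar\Theta}.$$

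First I would establish the key pointwise inequality $A\phi\geq -1$ on $M$. Since $\Ome+\pt\pt_J\phi$ is a strictly positive $(2,0)$-form, wedging it with the positive form $\Ome^{n-1}\wedge\bar\Theta$ yields a non-negative multiple of the volume form $\mu$; this is the elementary positivity of the mixed wedge product of positive $(2,0)$-forms from the quaternionic linear algebra of \cite{_Alesker_Verbitsky_}. Consequently $\pt\pt_J\phi\wedge\Ome^{n-1}\wedge\bar\Theta\geq -\Ome^n\wedge\bar\Theta$, and dividing by $\mu>0$ gives $A\phi\geq -1$ everywhere.

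Next I invoke Lemma \ref{L:green}, which provides a non-negative Green function $G(x,y)\geq 0$ with
$$\phi(x)-\bar\phi=-\int_M G(x,y)\,A\phi(y)\,d\mu(y)$$
for all $x\in M$. Using $A\phi\geq -1$ together with $G\geq 0$, the integrand satisfies $-G(x,y)A\phi(y)\leq G(x,y)$, so that
$$\phi(x)-\bar\phi\leq \int_M G(x,y)\,d\mu(y)\leq C_0,\qquad C_0:=\sup_{x\in M}\int_M G(x,y)\,d\mu(y).$$
The finiteness of $C_0$ is the one step requiring care and is the main obstacle: $G(x,\cdot)$ is smooth off the diagonal with an integrable singularity of order $d(x,y)^{2-4n}$ that is uniform in $x$, so $x\mapsto\int_M G(x,y)\,d\mu(y)$ is a continuous, hence bounded, function on the compact manifold $M$. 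This is a standard property of the Green function constructed in Appendix \ref{Green function section}.

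Finally I evaluate at a maximum point. By the normalization $\max_M\phi=0$ there is a point $x_0$ with $\phi(x_0)=0$, whence $-\bar\phi\leq C_0$, i.e. $\bar\phi\geq -C_0$. Since $\phi\leq 0$ we also have $\bar\phi\leq 0$, and therefore
$$\normL{\phi}{1}{M}=-\int_M \phi\,d\mu=-V\bar\phi\leq V C_0,$$
a constant depending only on $(M,I,J,K,\Ome)$, since $\Theta$, and hence $V$ and $C_0$, are determined by $\Ome$ through Proposition \ref{P:gauduchon}. Replacing $\mu$ by any other fixed smooth volume form changes the constant only by a bounded factor, so the bound holds for the $L^1$-norm with respect to any such measure. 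With $C_0<\infty$ in hand, the positivity $A\phi\geq -1$ and the closing arithmetic are routine.
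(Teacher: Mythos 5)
Your proposal is correct and follows essentially the same route as the paper: the pointwise bound $A\phi\geq -1$ from the positivity of $\Ome+\pt\pt_J\phi$, the Green representation of Lemma \ref{L:green} evaluated at a maximum point of $\phi$, and the uniform $L^1$-bound on $G(x,\cdot)$ from the Appendix. Your write-up is in fact slightly more careful than the paper's about the normalization factor $\text{vol}(M)$ and about why $\sup_x\int_M G(x,y)\,d\mu(y)$ is finite.
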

\begin{proof} Let us choose positive $\Theta$ as in Proposition
\ref{P:gauduchon}. Let $x\in M$ be a point of maximum of $\phi$. Denote
by $A$ the operator as in Lemma \ref{L:green}, and let $G\geq0$ be
its Green function. We have by assumption
$$-A\phi=\frac{-\pt\pt_J\phi\wedge \Ome^{n-1}\wedge
\bar\Theta}{\Ome^n\wedge\bar\Theta}\leq 1.$$

We have
\begin{eqnarray*}
||\phi||_{L^1}=0-\int \phi \cdot (\Ome^n\wedge\bar\Theta)=-\int
G(x,y)A\phi(y)\cdot (\Ome^n\wedge\bar\Theta)\leq\\ \int G(x,y)\cdot
(\Ome^n\wedge\bar\Theta)\leq const(M,\Ome).
\end{eqnarray*}
\end{proof}

\appendix
\section{Green function bounded from below}\label{Green function section}

We adapt several proofs from \cite{ShubinSpectralElliptic}
together with a classical local result to prove that for any
elliptic second-order differential operator $A$ on a compact
manifold $M$ there exists a Green function $G(x,y)$ that satisfies
the properties below. This result seemed plausible to experts, however
we have not found a proof in the literature and hence choose to fill this gap here.

Consider the Hilbert space $L^2(M,\R)$ where the inner product on
smooth functions is given by $\langle f,g \rangle : = \int_M
f(x)g(x) dV(x)$ for a normalized smooth measure $dV$. Denote by
$U:L^2(M,\R) \to L^2(M,\R)$ the operator of projection onto the
orthogonal complement $J_0$ of the kernel $K \subset \sm{M}$ of $A$. Then the
properties are:

\begin{enumerate}
\setcounter{enumi}{-1}
\item $\int_M G(x,y)A\phi(y)dV(y) = U \phi(x)$, for any $\phi \in
\sm{M}$.
\item $G(x,y)$ is smooth outside the diagonal $\Delta \subset M\times M$.
\item $G(x,y) \geq -D_1$ (a.e.) on $M\times M$ for a constant $D_1 >0$.
\item For any fixed $x \in M$, $\normL{G(x,\cdot)}{1}{M} < D_2$ for a constant $D_2 >0$.
\end{enumerate}

Note that these properties are what is left to show to establish our
desired a priori $L^1$-bound. Indeed in our case $U\phi=\phi -
\int_M \phi dV$, for $dV$ the normalized smooth measure  $dV = \Om^n
\wedge \Theta_0^n / \int_M \Om^n \wedge \Theta_0^n$.


We now construct an operator $A^\iota: \sm{M,\R} \to \sm{M,\R}$
whose Schwartz kernel (c.f. \cite[Section 5.2]{HormanderI}) satisfies
property $(0)$. We then show that it also satisfies the other
properties.

First consider the spaces $K=Ker A \subset \sm{M,\R}$ and $C = Ker
A^* \subset \sm{M,\R}$. These have additive closed complements $I = Image
A$ and $J = Image A^*$ where $A^*$ is the adjoint differential
operator to $A$ with respect to $dV$. For each Sobolev completion $L^2_s$ of $\sm{M,\R}$ denote by $I_s$,
$J_s$ the corresponding completions of subspaces. These are the images of the
corresponding operators $A_{s+2}:L^2_{s+2} \to L^2_s$ and $(A^*)_{s+2}:L^2_{s+2} \to L^2_s$.
For what follows choose an $L^2$-orthonormal basis $\{f_1,...,f_k\} \subset \sm{M}$ of the kernel $K$ of $A$,
and an $L^2$-orthonormal basis $\{g_1,...,g_l\} \subset \sm{M}$ of the kernel $C$ of $A^*.$ Denote by $P$ the operator $\sm{M} \to \sm{M}$ given by $f \mapsto \Sum_j \langle f,f_j \rangle f_j$. Its image is $K$ and it is a smoothing operator - its Schwartz kernel is $K_P(x,y) = \Sum_j f_j(x)f_j(y) \in \sm{M \times M}$. Denote by $P_s$ its extension $L^2_s \to \sm{M}$. Similarly define $Q$ with image $C$, with Schwartz kernel $K_Q(x,y) = \Sum_l g_l(x)g_l(y) \in \sm{M \times M}$ and $Q_s$ its extension to $L^2_s$.

For every $s$, we have $L^2_s = K \bigoplus J_s$ and $L^2_s = C \bigoplus I_s$ as Banach spaces. Denote by $U_s$ and $V_s$ the projections complementary to $P_s$ and to $Q_s$ - namely $U_s = Id - P_s$ and $V_s = Id - Q_s$. Noting that $A_s|_{J_s}:J_s \to I_{s-2}$ is a bijective continuous map of Hilbert spaces\footnote{Indeed, a differential operator of order $2$ induces a continuous linear map of Hilbert spaces $L^2_s \to L^2_{s-2}$, and $J_s$ being a closed direct complement to the kernel of this map, the conclusion follows.} and has as such, by the Banach inverse mapping theorem, a continuous inverse $D_{s-2}:I_{s-2} \to J_s$, we construct a continuous linear map \[(A^\iota)_{s-2}:L^2_{s-2} \to L^2_{s}\] as the composition
\[L^2_{s-2} \xrightarrow{V_{s-2}} I_{s-2} \xrightarrow{D_{s-2}} J_s \hookrightarrow L^2_s.\] It is easy to check that these diagrams for different $s$ are commuting with the embeddings $L^2_{s'} \to L^2_s$ for $s' > s$ and therefore define a continuous linear operator  \[A^\iota:\sm{M} \to \sm{M}.\]  We note that as $(A^\iota)_{s-2} (A)_s = U_s$ for all $s$, we have $A^\iota A = U$ on the $C^\infty$ level - i.e. property (0).


We now construct a pseudodifferential parametrix $B$ for $A$ and then show that $B$ and $A^\iota$ differ by a smoothing operator.

\begin{construction}
It is well-known (c.f. \cite[Proposition 3.4]{ShubinSpectralElliptic}) that for the elliptic differential operator $A$ there exists a pseudo-differential operator $B$ on $M$ such that $AB-Id$ and $BA - Id$ are smoothing pseudo-differential operators. Such an operator $B$ is called a (pseudo-differential) parametrix for $A$. For the convenience of the reader and for use in establishing properties $(2)$ and $(3)$ we recall the construction of this operator. Cover the manifold $M$ by small balls $B(x_i,\eps)$ with respect to an auxiliary Riemannian metric. Consider the restrictions $A_i$ of $A$ to the space of functions compactly supported in $B(x_i,2\eps)$. Then each operator $A_i$ has a pseudo-differential parametrix $B_i$. Then \begin{equation}\label{B construction}B = \Sum_i \Psi_i B_i \Phi_i\end{equation} is the required parametrix \cite[Proposition 3.4]{ShubinSpectralElliptic}. Here $\Phi_i,\Psi_i$ are the operators of multiplication by $\phi_i,\psi_i$ respectively for $\{\phi_i\}_i$ forming a partition of unity subordinate to the covering $\{B(x_i,\eps)\}_i$ and $\psi_i$ is a positive smooth function supported in $B(x_i,1.5 \eps)$ and identically equal to $1$ on $B(x_i,\eps)$.
\end{construction}

\begin{lemma}\label{local pseudodifferential}
The uniformly elliptic operator $L=A_i$ on the ball $\Om=B(x_i,2\eps)$ has a pseudodifferential
parametrix $B_i$ whose Schwartz kernel $G_i(x,y)$ satisfies
\[G(x,x+z) =G_0(x,x+z) + o(|z|^{2-n})\] uniformly for $|z|<1$, $x \in K \Subset \Om$ as $z \to 0$, where
$$G_0(x,y) = \frac{1}{(n-2) \om_n} det(L_2(x))^{-1/2} \langle L_2(x)(x -
y),(x - y)\rangle^{2-n},$$ $$Lu = -\Sum_{i,j}
(L_2)_{ij}\del_i \del_j u + \Sum b_k \del_k u + c u,$$ for a uniformly positive definite\footnote{The sign in the definition of $L$ appears already in the case of the constant coefficient Laplace operator.} matrix $(L_2)(x)$ and $\om_n =
\frac{2 \pi^{n/2}}{\Gamma(n/2)}$.
\end{lemma}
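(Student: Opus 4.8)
The plan is to construct the parametrix $B_i$ by the standard symbolic calculus and then read off the leading singularity of its Schwartz kernel from the principal symbol of $L$. Write the full symbol of $L$ as $p(x,\xi) = p_2(x,\xi) + p_1(x,\xi) + p_0(x)$, where $p_2(x,\xi) = \sum_{i,j}(L_2)_{ij}(x)\xi_i\xi_j$ is the principal symbol, which by uniform ellipticity is bounded below by $\lambda|\xi|^2$ for some $\lambda>0$ uniform on $K$. By ellipticity one builds a classical symbol $b(x,\xi)\sim\sum_{j\ge 0}b_{-2-j}(x,\xi)$, each $b_{-2-j}$ homogeneous of degree $-2-j$ in $\xi$ for $|\xi|\ge 1$, via the usual recursion starting from $b_{-2}(x,\xi)=p_2(x,\xi)^{-1}$; the operator $B_i$ with this symbol is a parametrix (this is the construction already recalled from Shubin). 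Its kernel is the oscillatory integral
$$G_i(x,y) = \frac{1}{(2\pi)^n}\int_{\R^n} e^{i\langle x-y,\xi\rangle}\,b(x,\xi)\,d\xi,$$
where I may insert a cutoff $\chi(\xi)$ equal to $1$ for $|\xi|\ge 1$ and vanishing near $\xi=0$ at no cost, since altering $b$ on a compact $\xi$-set changes $G_i$ only by a term smooth in $(x,y)$.

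Next I would extract the leading term. Splitting $b=\chi b_{-2}+(b-\chi b_{-2})$, the inverse Fourier transform of the even homogeneous-of-degree-$(-2)$ distribution $p_2(x,\cdot)^{-1}$ is a function of $x-y$ homogeneous of degree $2-n$. Computing it by the linear substitution $\xi=L_2(x)^{-1/2}\eta$, which turns $p_2(x,\xi)$ into $|\eta|^2$ and contributes the Jacobian $\det(L_2(x))^{-1/2}$, reduces the integral to the classical fundamental solution of $-\Delta$ in $\R^n$ (with $n\ge 3$, which holds since $\dim M\ge 4$),
$$\frac{1}{(2\pi)^n}\int_{\R^n} e^{i\langle w,\eta\rangle}|\eta|^{-2}\,d\eta = \frac{1}{(n-2)\,\om_n}\,|w|^{2-n},$$
evaluated at $w=L_2(x)^{-1/2}z$, with $z=y-x$. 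Since $|L_2(x)^{-1/2}z|^2=\langle L_2(x)^{-1}z,z\rangle$, this is exactly the fundamental solution $G_0(x,x+z)$ of the frozen constant-coefficient operator $-\sum_{i,j}(L_2)_{ij}(x)\del_i\del_j$, which is the claimed leading term. (The difference $(1-\chi)b_{-2}$ is compactly supported in $\xi$, so it contributes a smooth, in particular $o(|z|^{2-n})$, term.)

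It then remains to show that $b-\chi b_{-2}$ contributes only $o(|z|^{2-n})$, uniformly for $x\in K$ and $|z|<1$. Here I would truncate the asymptotic expansion at a high order: the symbol $b-\chi\sum_{j<N}b_{-2-j}$ has order $\le -2-N$, so for $N\ge n-1$ its kernel is continuous (indeed $C^m$ for $N$ large), hence $o(|z|^{2-n})$. Each intermediate piece $\chi b_{-2-j}$ with $1\le j<N$ has an inverse Fourier transform which, up to a smooth term from the cutoff, is homogeneous of degree $2-n+j$ when $j<n-2$ — hence $O(|z|^{2-n+j})=o(|z|^{2-n})$ — and is bounded (with at most a $\log|z|$ factor in the borderline case $j=n-2$) once $j\ge n-2$, again $o(|z|^{2-n})$. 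Summing the finitely many pieces yields the stated estimate.

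The hard part will not be identifying the leading term but making every estimate uniform in $x\in K$. The ellipticity constant $\lambda$, the bounds on the finitely many $\xi$- and $x$-derivatives of the $b_{-2-j}$ entering the recursion, and the resulting kernel bounds must all be controlled uniformly on $K\Subset\Om$. This is where the quantitative form of the classical parametrix construction is needed: the recursion for $b_{-2-j}$ uses only finitely many derivatives of the smooth coefficients of $L$ together with divisions by $p_2$, all uniformly bounded on $K$ by uniform ellipticity, so the homogeneous-distribution Fourier-transform estimates above carry uniform constants. Tracking these bounds through the remainder term is the real content of the lemma.
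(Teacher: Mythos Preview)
Your approach is correct and is precisely the route the paper indicates: it does not give a proof but only cites Shimakura's book together with ``an explicit calculation (up to a smooth function) of the leading term as a Fourier transform of a locally integrable homogeneous function,'' which is exactly your computation of the inverse Fourier transform of $p_2(x,\cdot)^{-1}$ via the substitution $\xi=L_2(x)^{-1/2}\eta$ followed by symbol-order bookkeeping for the remainder. You have in fact supplied considerably more detail than the paper does; note only that your (correct) computation yields $\langle L_2(x)^{-1}z,z\rangle^{(2-n)/2}$, so the formula for $G_0$ as stated appears to contain typos, though this is immaterial for the use made of the lemma.
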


This can be shown in several ways - one is by \cite[Chapter III, Theorem 3.3]{NorioShimakura} together with an explicit calculation (up to a smooth function) of the leading term as a Fourier transform of a locally integrable homogenous function.

By Lemma \ref{local pseudodifferential} the functions $G_i$ are all bounded from below on $B(x_i,1.5 \eps)\times B(x_i,1.5 \eps)$ since they are positive near the diagonal and smooth off the diagonal. Now the Schwarz kernel of $B$ is $$K'(x,y)=\Sum_i \psi_i(x) G_i(x,y)
\phi_i(y).$$ We would like to provide lower bounds on $K'(x,y)$ and upper bounds on $\normL{K'(x,\cdot)}{1}{M}$.

\begin{lemma}Since all $G_i(x,y)$ are bounded from below, we have $K'(x,y) \geq -D'_1$ for $D'_1>0$ on $M\times M$. Moreover since for all $G_i(x,y)$ we have $\normL{G_i(x,\cdot)}{1}{B(x_i,\eps)} < D'_{2,i}$ we have  $\normL{K'(x,\cdot)}{1}{M} < D'_2 = \Sum_i D'_{2,i}$.\end{lemma}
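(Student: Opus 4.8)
The plan is to prove the two assertions separately, in each case reducing to the corresponding property of the individual kernels $G_i$ via the pointwise identity $K'(x,y)=\Sum_i \psi_i(x) G_i(x,y)\phi_i(y)$ together with the support and nonnegativity properties of the cutoffs. Recall that $\{\phi_i\}$ is a partition of unity with $0\le \phi_i\le 1$ and $\supp \phi_i\subset B(x_i,\eps)$, while each $\psi_i$ is the standard cutoff with $0\le\psi_i\le 1$, $\supp\psi_i\subset B(x_i,1.5\eps)$, and $\psi_i\equiv 1$ on $B(x_i,\eps)$; crucially the cover is finite.

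For the lower bound, I would first observe that for each index $i$ the product $\psi_i(x)\phi_i(y)$ can be nonzero only when $x\in B(x_i,1.5\eps)$ and $y\in B(x_i,\eps)\subset B(x_i,1.5\eps)$, that is, only when both points lie in $B(x_i,1.5\eps)\times B(x_i,1.5\eps)$. On this product ball the previous paragraph (via Lemma \ref{local pseudodifferential}) furnishes a constant $c_i>0$ with $G_i\ge -c_i$, since $G_i$ is positive near the diagonal and smooth off it. Hence each summand obeys $\psi_i(x)G_i(x,y)\phi_i(y)\ge -c_i\,\psi_i(x)\phi_i(y)$. Summing and using $0\le\psi_i\le1$ together with $\Sum_i\phi_i(y)=1$ gives $K'(x,y)\ge -(\max_i c_i)\Sum_i\phi_i(y)=-\max_i c_i$, so that $D_1:=\max_i c_i$ works.

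For the $L^1$ bound I would integrate the triangle inequality $|K'(x,y)|\le\Sum_i\psi_i(x)\,|G_i(x,y)|\,\phi_i(y)$ in the variable $y$. Since $0\le\phi_i\le 1$ and $\supp\phi_i\subset B(x_i,\eps)$, the inner integral is dominated by $\int_{B(x_i,\eps)}|G_i(x,y)|\,dy=\normL{G_i(x,\cdot)}{1}{B(x_i,\eps)}<D_{2,i}$, which is precisely the standing hypothesis on $G_i$. Using $\psi_i(x)\le 1$ and summing over the finitely many $i$ yields $\normL{K'(x,\cdot)}{1}{M}\le\Sum_i D_{2,i}=D'_1$.

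The only delicate point is the bookkeeping of supports: one must verify that whenever a term contributes to $K'$, both arguments of $G_i$ lie inside $B(x_i,1.5\eps)$, which is exactly the region on which the lower bound of $G_i$ was established in the preceding paragraph, and that the $y$-argument lies inside $B(x_i,\eps)$, which is where the $L^1$ bound was assumed. Finiteness of the cover is what converts the uniform local bounds $c_i$ and $D_{2,i}$ into the single constants $D_1=\max_i c_i$ and $D'_1=\Sum_i D_{2,i}$. No deeper obstacle arises; the estimate is a direct consequence of the already-established properties of the local parametrices.
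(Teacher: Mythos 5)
Your proof is correct and follows exactly the route the paper intends: the paper states this lemma with its justification folded into the statement itself (the lower bounds on the $G_i$ over $B(x_i,1.5\eps)\times B(x_i,1.5\eps)$ and the local $L^1$ bounds were established in the preceding paragraph), and your writeup simply supplies the omitted bookkeeping with the cutoff supports, $0\le\phi_i,\psi_i\le1$, $\Sum_i\phi_i\equiv1$, and finiteness of the cover. No discrepancy with the paper's argument.
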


We now compare $A^\iota$ and $B$. We follow \cite[Theorem 3.6]{ShubinSpectralElliptic} to show the following.

\begin{prop} \begin{equation}\label{A iota via B and T}A^\iota = B + T\end{equation} where $T$ has a Schwartz kernel $K_T \in \sm{M\times M}$. \end{prop}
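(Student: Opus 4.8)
The plan is to show that the difference $T := A^\iota - B$ is itself a smoothing operator, i.e. has a $C^\infty$ Schwartz kernel, by a short algebraic cancellation followed by an appeal to the fact that smoothing operators form a two-sided ideal among operators that are bounded on the whole Sobolev scale. First I would record the two ingredients already available. By property $(0)$ proved above, $A^\iota A = U = \mathrm{Id} - P$, where $P$ is the finite-rank projection onto $K = \ker A$ whose Schwartz kernel $K_P(x,y) = \Sum_j f_j(x) f_j(y)$ lies in $\sm{M \times M}$; thus $P$ is smoothing. By the preceding Construction, $B$ is a parametrix, so $S_1 := BA - \mathrm{Id}$ and $S_2 := AB - \mathrm{Id}$ are smoothing pseudodifferential operators.

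Next I would compute $TA$ directly. Since $TA = A^\iota A - BA = (\mathrm{Id} - P) - (\mathrm{Id} + S_1) = -P - S_1$, the operator $TA$ is smoothing. To strip off the factor $A$ on the right I would compose with $B$ and use the other parametrix identity: from $TAB = T(\mathrm{Id} + S_2) = T + T S_2$ we obtain
$$T = (TA)B - T S_2.$$
It then remains only to verify that each summand on the right is smoothing.

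For that verification I would use that both $A^\iota$ and $B$ raise Sobolev regularity by exactly two: $A^\iota$ is assembled from the continuous maps $(A^\iota)_{s-2} : L^2_{s-2} \to L^2_s$ constructed above, while $B$, being a pseudodifferential parametrix of the second-order operator $A$, has order $-2$; hence $T = A^\iota - B$ maps $L^2_s \to L^2_{s+2}$ continuously for every $s$. A smoothing operator maps $L^2_s \to L^2_t$ continuously for all $s,t$, and composing such an operator with any operator that is bounded on the Sobolev scale (on either side) again produces an operator bounded $L^2_s \to L^2_t$ for all $s,t$, that is, a smoothing operator (equivalently, one with smooth kernel). Applying this with $TA$ smoothing and $B$ of order $-2$ shows that $(TA)B$ is smoothing, and with $S_2$ smoothing and $T$ of order $-2$ shows that $T S_2$ is smoothing. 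Therefore $T$ is smoothing, which is exactly the assertion $(\ref{A iota via B and T})$.

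The main obstacle is not the algebra, which is the two-line cancellation above, but the functional-analytic bookkeeping that legitimizes the compositions. One must know that $A^\iota$ genuinely acts continuously on \emph{every} Sobolev space with a uniform two-step gain in regularity — precisely what the compatible family $(A^\iota)_{s-2}$ was built to guarantee — and one must use the characterization of smoothing operators by continuity on the entire Sobolev scale (equivalently, smoothness of the Schwartz kernel). These are exactly the points where the earlier construction of $(A^\iota)_{s-2}$ and the local parametrix estimates of Lemma \ref{local pseudodifferential} enter, so the proof should be presented with enough care that the two-sided ideal property is applied to operators already known to be bounded $L^2_s \to L^2_{s+2}$.
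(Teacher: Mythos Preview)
Your argument is correct and follows essentially the same approach as the paper: both show $T=A^\iota-B$ is smoothing by writing it as a finite sum of compositions in which one factor is smoothing and the other is bounded on the full Sobolev scale, then invoking the two-sided ideal property of smoothing operators. The only cosmetic difference is that the paper multiplies the single identity $AB=\mathrm{Id}+R$ on the left by $A^\iota$ to obtain directly $T=-PB-A^\iota R$, whereas you use both parametrix identities to reach $T=(TA)B-TS_2$; the functional-analytic justification (continuity of $(A^\iota)_{s-2}\colon L^2_{s-2}\to L^2_s$ for all $s$, and the Sobolev characterization of smoothing operators) is the same in both cases.
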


Indeed we have $AB=Id + R$, for a smoothing operator $R$. Multiplying on the left by $A^\iota$ we have $UB = A^\iota + A^\iota R$, that is $A^\iota = B - PB - A^\iota R$, where $P$ is the projection operator onto $K = Ker A$.

Note first that $A^\iota R$ is smoothing as $R$ is smoothing and
$A^\iota$ is continuous as an operator $\sm{M,\R} \to \sm{M,\R}$.
Indeed smoothing operators can be described on one hand as those
operators $\sm{M,\R} \to \mathcal{D}(M,\R)$ that have smooth
kernels, and on the other as those operators that can be extended to
linear maps $\mathcal{D}(M,\R) \to \sm{M,\R}$ continuous in the weak topology on every
bounded subset of $\mathcal{D}(M,\R)$. We refer to \cite[(23.11.1)]{Dieudonne} for the equivalence of
these descriptions. From the second description, the statement is
immediate.

Note also that $PB$ is smoothing since $B$ is a pseudo-differential operator and $P$ is smoothing. Hence $A^\iota = B + T$ for $T = -PB - A^\iota R$ - a smoothing operator.

\begin{wrappingup}Now note that as $A^\iota = B + T$, we have $G(x,y) = K'(x,y) + K_T(x,y)$, yet $K'(x,y) \geq -D'_1$ on $M\times M$ and $K_T(x,y) \geq -D''_1$ as $K_T \in \sm{M \times M}$. Hence $G(x,y) \geq -D_1$ for $D_1 = D'_1 + D''_1$ and property (1) is established. Property (0) follows by definition and property (2) follows from $\normL{K'(x,\cdot)}{1}{M} < D'_2$ and $\normL{K_T(x,\cdot)}{1}{M} \leq E = \normL{K_T}{\infty}{M\times M} < \infty$ for $D_2 = E + D'_2$.
\end{wrappingup}

\bibliographystyle{amsplain}
\bibliography{UniformBoundsCalabiYauHKTRefs1}

\providecommand{\bysame}{\leavevmode\hbox to3em{\hrulefill}\thinspace}
\providecommand{\MR}{\relax\ifhmode\unskip\space\fi MR }
\providecommand{\MRhref}[2]{%
  \href{http://www.ams.org/mathscinet-getitem?mr=#1}{#2}
}
\providecommand{\href}[2]{#2}
\begin{thebibliography}{10}

\bibitem{alesker-bsm}
S.~Alesker, \emph{Non-commutative linear algebra and plurisubharmonic functions
  of quaternionic variables}, Bull. Sci. Math. \textbf{127} (2003), no.~1,
  1--35, math.CV/0104209.

\bibitem{alesker-jga-03}
\bysame, \emph{Quaternionic {M}onge-{A}mp$\grave{\text{e}}$re equations}, J.
  Geom. Anal. \textbf{13} (2003), no.~2, 205--238.

\bibitem{alesker-2011}
\bysame, \emph{Solvability of the quaternionic
  {M}onge-{A}mp$\grave{\text{e}}$re equation on compact manifolds with a flat
  hyper{K}aehler metric}, preprint: arXiv:1108.5910, 2011.

\bibitem{_Alesker_Verbitsky_}
S.~Alesker and M.~Verbitsky, \emph{Plurisubharmonic functions on hypercomplex
  manifolds and {HKT}-geometry}, J. Geom. Anal. \textbf{16} (2006), no.~3,
  375--399, math.CV/0510140.

\bibitem{QuaternionicCalabiYau1}
\bysame, \emph{Quaternionic {M}onge-{A}mp$\grave{\text{e}}$re equation and
  {C}alabi problem for {HKT}-manifolds}, Israel J. Math. \textbf{176} (2010),
  109--–138.

\bibitem{BlockiUniform}
Z.~Blocki, \emph{On uniform estimate in {C}alabi-{Y}au theorem}, Sci. China
  Ser. A \textbf{48, suppl.,} (2005), 244--247.

\bibitem{boyer}
C.~P. Boyer, \emph{A note on hyper-{H}ermitian four-manifolds}, Proc. Amer.
  Math. Soc. \textbf{102} (1988), no.~1, 157--164.

\bibitem{Dieudonne}
J.~Dieudonn$\acute{\text{e}}$, \emph{Treatise on analysis. {V}ol. {VII}}, Pure
  and Applied Mathematics, vol. 10-VII, Academic Press, Inc., Boston, MA, 1988,
  Translated from the French by Laura Fainsilber.

\bibitem{gauduchon-77}
P.~Gauduchon, \emph{Le th$\acute{\text{e}}$or$\grave{\text{e}}$me de
  l'excentricit$\grave{\text{e}}$ nulle. ({F}rench)}, C. R. Acad. Sci. Paris
  S$\acute{\text{e}}$r. A-B \textbf{285} (1977), no.~5, A387--A390.

\bibitem{GilbargTrudinger}
D.~Gilbarg and N.~S. Trudinger, \emph{Elliptic partial differential equations
  of second order}, Classics in Mathematics, Springer-Verlag, Berlin, 1998.

\bibitem{_Gra_Poon_}
G.~Grantcharov and Y.~S. Poon, \emph{Geometry of hyper-{K}$\ddot{\text{a}}$hler
  connections with torsion}, Comm. Math. Phys. \textbf{213} (2000), no.~1,
  19--37.

\bibitem{HormanderI}
L.~H$\ddot{\text{o}}$rmander, \emph{The analysis of linear partial differential
  operators. {I}. {D}istribution theory and {F}ourier analysis.}, second ed.,
  Grundlehren der Mathematischen Wissenschaften, vol. 256, Springer-Verlag,
  Berlin, 1990.

\bibitem{howe-papa}
P.~S. Howe and G.~Papadopoulos, \emph{Twistor spaces for
  hyper-{K}$\ddot{\text{a}}$hler manifolds with torsion}, Phys. Lett. B.
  \textbf{379} (1996), no.~1-4, 80--86.

\bibitem{obata}
M.~Obata, \emph{Affine connections on manifolds with almost complex, quaternion
  or {H}ermitian structure}, Jap. J. Math. \textbf{26} (1956), 43--77.

\bibitem{NorioShimakura}
N.~Shimakura, \emph{Partial differential operators of elliptic type},
  Translations of Mathematical Monographs, vol.~99, American Mathematical
  Society, Providence, RI, 1992, Translated and revised from the 1978 Japanese
  original by the author.

\bibitem{ShubinSpectralElliptic}
M.~A. Shubin, \emph{Spectral theory of elliptic operators on noncompact
  manifolds}, Ast$\acute{\text{e}}$risque \textbf{5} (1992), no.~207, 35--108.

\bibitem{hkt-mflds-supersymmetry}
M.~Verbitsky, \emph{Hyperkaehler manifolds with torsion, supersymmetry and
  {H}odge theory}, Asian J. Math. \textbf{6} (2002), no.~4, 679--712,
  arXiv:math/0112215.

\bibitem{verbitsky-2009}
\bysame, \emph{Balanced {HKT} metrics and strong {HKT} metrics on hypercomplex
  manifolds}, Math. Res. Lett. \textbf{16} (2009), no.~4, 735--–752.

\end{thebibliography}

\end{document}